\title{Rainbow cycles through specified vertices}
\author{
Henry Liu
\\
\\
\normalsize
School of Mathematics\\
Sun Yat-sen University\\
Guangzhou 510275, China\\
\normalsize liaozhx5@mail.sysu.edu.cn\\
}
\date{4 May 2024}
\newtheorem{theorem}                   {Theorem}
\newtheorem{thm}             [theorem] {Theorem}
\newtheorem{lem}             [theorem] {Lemma}
\newtheorem{prp}             [theorem] {Proposition}
\newtheorem{prob}            [theorem] {Problem}
\def\dcup{\,\dot\cup\,}
\begin{document}
\maketitle

\begin{abstract}
An edge-coloured cycle is \emph{rainbow} if the edges have distinct colours. Let $G$ be a graph such that any $k$ vertices lie in a cycle of $G$. The \emph{$k$-rainbow cycle index} of $G$, denoted by $crx_k(G)$, is the minimum number of colours required to colour the edges of $G$ such that, for every set $S$ of $k$ vertices in $G$, there exists a rainbow cycle in $G$ containing $S$. In this paper, we will first prove some results about the parameter $crx_k(G)$ for general graphs $G$. One of the results is a classification of all graphs $G$ such that $crx_k(G)=e(G)$, for $k=1,2$. We will also determine $crx_k(G)$ for some specific graphs $G$, including wheels, complete graphs, complete bipartite and multipartite graphs, and discrete cubes.\\
\\
\noindent\textbf{AMS Subject Classification (2020):} 05C15, 05C38\\
\\
{\bf Keywords:} Edge-colouring; rainbow coloured cycle; specified vertices
\end{abstract}

\section{Introduction}
In this paper, all graphs are simple and finite. Let $K_t$ and $C_t$ denote the complete graph and the cycle on $t$ vertices. Let $G$ be a graph with vertex set $V(G)$ and edge set $E(G)$. For disjoint sets $X,Y\subset V(G)$, let $E(X,Y)$ denote the edges of $G$ with one end-vertex in $X$ and the other in $Y$. Let $[X,Y]$ be the bipartite subgraph of $G$ with classes $X$ and $Y$, and edge set $E(X,Y)$. For $r\in\mathbb N$, an \emph{$r$-colouring} of $G$ is a function $\phi : E(G)\to \{1,2,\dots,r\}$, and $\phi$ is \emph{proper} if $\phi(e)\neq \phi(e')$ whenever $e$ and $e'$ are adjacent edges. We will often refer the values $1,2,\dots,r$ as \emph{colours}, and we may also call $\phi$ an \emph{edge-colouring} or \emph{colouring}. For $k\in \mathbb N$, a graph $G$ is \emph{$k$-connected} if $|V(G)|>k$, and for every $C\subset V(G)$ with $|C|\le k-1$, the subgraph $G-C$ is connected. For any other undefined terms in graph theory, the reader is referred to book of Bollob\'as \cite{Bol98}.

The topic of colourings of graphs has a long history which dates back to the famous \emph{Four-colour conjecture} from the 1850s. For edge-colourings, a landmark result is due to Vizing \cite{Viz64} in 1964 which states that $\chi'(G)\in\{\Delta(G),\Delta(G)+1\}$ for any graph $G$, where $\chi'(G)$ and $\Delta(G)$ are the \emph{edge-chromatic number} (or \emph{chromatic index}) and \emph{maximum degree} of $G$. A significant direction in the topic, introduced independently in the late 1970s by Vizing \cite{Viz76}; and by Erd\H{o}s, Rubin and Taylor \cite{ERT80}, is \emph{list colourings}. A famous open problem is the \emph{List colouring conjecture}, which states that $\chi'(G)=\chi'_\ell(G)$ for any graph $G$, where $\chi'_\ell(G)$ is the \emph{list edge-chromatic number} of $G$. This conjecture first appeared in print in a paper of Bollob\'as and Harris \cite{BH85} in 1985. There is a vast amount of literature about colourings of graphs, and a good starting reference is the book by Jensen and Toft \cite{JT95}.

Problems in graph theory concerning cycles through specified vertices have been well-studied. A well-known result of Dirac \cite{Dir60} from 1960, which can be proved by using Menger's theorem \cite{KM27}, states that for $k\ge 2$, any $k$ specified vertices in a $k$-connected graph lie in a cycle. Since then, many problems and results in this topic have appeared. Bondy and Lov\'asz \cite{BL81} proved that for $k\ge 2$, any $k-1$ specified vertices in a $k$-connected non-bipartite graph lie in an \emph{odd} cycle; and for $k\ge 3$, any $k$ specified vertices in a $k$-connected graph lie in an \emph{even} cycle. When considering the degrees of a graph, another famous result of Dirac \cite{Dir52} from 1952 states that, any graph of order $n\ge 3$ with minimum degree at least $\frac{n}{2}$ contains a Hamilton cycle. This theorem has been generalised in many directions. One generalisation is by Bollob\'as and Brightwell \cite{BB93}. They proved that, if $n\ge k\ge 3$ and $d\ge 1$ are such that $s=\big\lceil\frac{k}{\lceil n/d\rceil-1}\big\rceil\ge 3$, then for any $k$ vertices of degree at least $d$ in a graph of order $n$, there exists a cycle containing at least $s$ of the vertices.

An edge-coloured graph is \emph{rainbow} if all the edges have distinct colours. Another popular research topic considers the presence of rainbow coloured subgraphs in edge-coloured graphs. One direction is \emph{anti-Ramsey numbers}, which was initiated by Erd\H{o}s, Simonovits and S\'os \cite{ESS75} in 1973. See \cite{Alo83,JW03,MN05} for results that involve rainbow cycles. A more recent direction is \emph{rainbow Tur\'an numbers}, which was proposed by Keevash et al.~\cite{KMSV07} in 2007. Many problems about rainbow cycles in edge-coloured graphs have also been considered. See for example, \cite{AFR95,Ale06,BPV07,ENR83,FK08,HT86}.

In this paper, we consider the following problem. \emph{For $k\ge 1$, let $\mathcal F_k$ denote the family of all graphs $G$ with the property that any $k$ vertices of $G$ lie in a cycle. Consider an edge-colouring of $G\in\mathcal F_k$ such that, any $k$ vertices lie in a rainbow cycle. What is the minimum number of colours in such an edge-colouring?} Any edge-colouring satisfying the condition of the problem is called a \emph{$k$-rainbow cycle colouring}, and a $1$-rainbow cycle colouring is simply called a \emph{rainbow cycle colouring}. For $G\in\mathcal F_k$, the minimum number of colours in a $k$-rainbow cycle colouring of $G$ is denoted by $crx_k(G)$, and is called the \emph{$k$-rainbow cycle index} of $G$. Thus, $crx_k(G)$ is well-defined if and only if $G\in\mathcal F_k$. The parameter $crx_k(G)$ can be considered as a type of edge-chromatic number. Moreover, the problem of the study of $crx_k(G)$ can be classed under both of the ``cycles through specified vertices'' and ``rainbow cycles in graphs'' themes. 

There are conceivably some real-world applications. For example, suppose that a certain country is served by many different airlines, where some pairs of cities are connected directly by exactly one airline. A travelling salesman wants to attend business meetings in $k$ designated cities in no particular order, visiting each city once, and returning to his originating city. Between two consecutive designated cities, he can travel via other cities, and in the entire trip, he never visits any city more than once. What is the minimum number of airlines that the country needs so that, for any choice of $k$ designated cities, the travelling salesman can always complete his trip, using a different airline between any two consecutive cities that he visits? This minimum is precisely $crx_k(G)$, where $G$ is the graph whose vertex set consists of all the cities, and two vertices of $G$ are adjacent if and only if the corresponding two cities are connected directly by an airline.


We note that Chartrand, Okamoto and Zhang \cite{COZ10} considered the \emph{$k$-rainbow index}, which is the analogous parameter with ``tree'' in place of ``cycle''. For a connected graph $G$ on $n$ vertices and $2\le k\le n$, the \emph{$k$-rainbow index}, denoted by $rx_k(G)$, is the minimum number of colours needed to colour the edges of $G$ so that, any $k$ vertices of $G$ are connected by a rainbow tree. Trivially, we also let $rx_1(G)=0$ for every graph $G$ (not necessarily connected). We remark that the $k$-rainbow cycle index $crx_k(G)$ has previously been studied by Li et al.~\cite{LSTZ19}, when they proved some computational complexity results. The problem of the study of the parameter $crx_k(G)$ was in fact originally introduced by the present author to the authors of \cite{LSTZ19}.

The family $\mathcal F_k$ contains some important subfamilies of graphs, including all Hamiltonian and pancyclic graphs on at least $k$ vertices. Also, for $k\ge 2$, Dirac's theorem \cite {Dir60} implies that $\mathcal F_k$ contains all $k$-connected graphs. Conversely, if $G\in\mathcal F_k$ ($k\ge 2$), then it is necessary that $G$ is $2$-connected. In Proposition \ref{crx12prp}, we characterise the families $\mathcal F_1$ and $\mathcal F_2$, where $\mathcal F_2$ is precisely the family of all $2$-connected graphs. For $k\ge 3$, no simple characterisation of the graphs of $\mathcal F_k$ is known, and only some sufficient conditions for graphs to be in $\mathcal F_k$ have been found \cite{BL81,Dir52}.

Some initial observations can be made. Clearly, we have $\mathcal F_1\supset\mathcal F_2\supset\mathcal F_3\supset\cdots$. If $G\in\mathcal F_k$, and $g(G)$ and $c(G)$ denote the \emph{girth} and \emph{circumference} of $G$, then we have $3\le g(G)\le crx_1(G)\le crx_2(G)\le\cdots\le crx_k(G)\le e(G)$ and $k\le \min(crx_k(G),c(G))$. 
Also, we have $crx_k(G)>rx_k(G)$. We will see in Theorem \ref{septhm} that for any $k\ge 1$, the difference $crx_k(G)-rx_k(G)$ can be small (at most $3$) for infinitely many graphs $G$, and can also be arbitrarily large. If $G,H\in\mathcal F_k$, and $H$ is a spanning subgraph of $G$, then $crx_k(G)\le crx_k(H)$.

This paper is organised as follows. In Section \ref{gensec}, we shall study the parameter $crx_k(G)$ for general graphs $G$. In particular, we shall characterise the graphs $G\in\mathcal F_k$ such that $crx_k(G)=e(G)$ for $k=1,2$. In Section \ref{specsec}, we will compute exactly or find tight estimates for $crx_k(G)$ for some specific graphs $G$, including wheels, complete graphs, complete bipartite and multipartite graphs, and discrete binary cubes.

\section{General graphs}\label{gensec}

In this section, we shall prove some results about the parameter $crx_k(G)$ for general graphs $G$. We begin by recalling some well-known concepts about $2$-connected graphs. 

A \emph{block} of a graph is a subgraph which is either an isolated vertex, or a bridge (i.e., a cut-edge), or a maximal $2$-connected subgraph (i.e., not strictly contained in another $2$-connected subgraph). Any graph $G$ has a unique \emph{block decomposition} into blocks $B_1,\dots,B_p$ for some $p\ge 1$, such that the edge sets $E(B_1),\dots,E(B_p)$ form a disjoint union of $E(G)$, and any two blocks are either vertex-disjoint, or meet at a cut-vertex of $G$. An \emph{end-block} is a block which is incident with at most one cut-vertex. If $G$ is connected and is not a block, then $G$ has at least two end-blocks, each of which contains exactly one cut-vertex. For further details, see \cite{Bol78} (Ch.~I) and \cite{Bol98} (Ch.~III.2).

A graph $G$ is \emph{minimally $k$-connected} if $G$ is $k$-connected, and for every edge $e\in E(G)$, the graph $G-e$ is not $k$-connected. The following result describes the structure of minimally $2$-connected graphs.

\begin{thm}[\cite{Bol78}, Ch.~I, Theorem 3.1]\label{min2connthm}
Let $G$ be a minimally $2$-connected graph, and $xy\in E(G)$. Then, $G-xy$ has no isolated vertex and at least two blocks. The blocks of $G-xy$, say $B_1,\dots,B_p$ for some $p\ge 2$, can be ordered linearly so that $V(B_i)\cap V(B_{i+1})=\{v_i\}$ for $1\le i<p$, where $v_1,\dots,v_{p-1}$ are all the cut-vertices of $G-xy$. The end-blocks of $G-xy$ are $B_1$ and $B_p$. Moreover, we have $x\in V(B_1)\setminus\{v_1\}$ and $y\in V(B_p)\setminus\{v_{p-1}\}$. See Figure 1.
\end{thm}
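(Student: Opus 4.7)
The plan is to analyse the block-cut tree of $G-xy$ and show that it must be a path whose endpoints are the two end-blocks containing $x$ and $y$ respectively. Throughout, I will repeatedly exploit the fact that $G$ is $2$-connected (so has minimum degree at least $2$ and no cut-vertex) while $G-xy$ is not $2$-connected (by minimality of $G$).

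First I would establish the basic properties. Since $G$ is $2$-connected, $G-xy$ is still connected, so $G-xy$ admits a block decomposition. Since $G$ has minimum degree at least $2$, every vertex other than possibly $x$ and $y$ still has degree at least $2$ in $G-xy$, while $x$ and $y$ retain degree at least $1$; hence $G-xy$ has no isolated vertex. By minimality, $G-xy$ is not $2$-connected, so it has at least one cut-vertex and therefore at least two blocks. Next, I would observe that neither $x$ nor $y$ is a cut-vertex of $G-xy$: indeed, since $x$ is incident with $xy$, one checks that $G-x=(G-xy)-x$, so any cut-vertex of $G-xy$ among $\{x,y\}$ would also be a cut-vertex of $G$, contradicting $2$-connectivity.

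The core step is to show that every end-block of $G-xy$ contains $x$ or $y$. Suppose $B$ is an end-block avoiding both, with cut-vertex $v$. Then in $G-xy$, every path from $V(B)\setminus\{v\}$ to the remainder of the graph passes through $v$; adding the edge $xy$ back in does not help, since neither endpoint lies in $B$. This would make $v$ a cut-vertex of $G$, again a contradiction. Combined with the fact that $x$ and $y$ are non-cut-vertices — hence each lies in exactly one block of $G-xy$ — this forces there to be exactly two end-blocks, one containing $x$ and one containing $y$. A simple counting argument in the block-cut tree (any tree with at most two leaves is a path) then shows this tree is a path, which yields the claimed linear ordering $B_1,\dots,B_p$ with consecutive blocks meeting at the cut-vertices $v_1,\dots,v_{p-1}$. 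Labelling so that $x\in V(B_1)$ and $y\in V(B_p)$, and recalling that $x,y$ are not cut-vertices, gives $x\in V(B_1)\setminus\{v_1\}$ and $y\in V(B_p)\setminus\{v_{p-1}\}$.

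The main obstacle I anticipate is the argument ruling out a third end-block; in particular, one must be careful that the re-insertion of $xy$ can only ``heal'' an end-block whose vertex set intersects $\{x,y\}$. Once that is done, everything else is routine bookkeeping with the block-cut tree. A schematic figure of the linear chain of blocks would presumably accompany the statement (as indicated by the reference to Figure 1), but is not needed for the argument itself.
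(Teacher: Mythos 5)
Your proof is correct. The paper itself gives no proof of this statement---it is quoted directly from \cite{Bol78} (Ch.~I, Theorem 3.1)---so there is no in-paper argument to compare against; your reasoning (first, $x$ and $y$ are not cut-vertices of $G-xy$ since $(G-xy)-x=G-x$; second, any end-block of $G-xy$ avoiding both $x$ and $y$ would leave its cut-vertex a cut-vertex of $G$ even after reinserting $xy$, so the block--cut tree has exactly two leaves and is therefore a path) is exactly the standard proof of this classical fact, and all its steps check out.
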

\indent\\[-0.8cm]
\begin{figure}[htp]
\centering
\includegraphics[width=8cm]{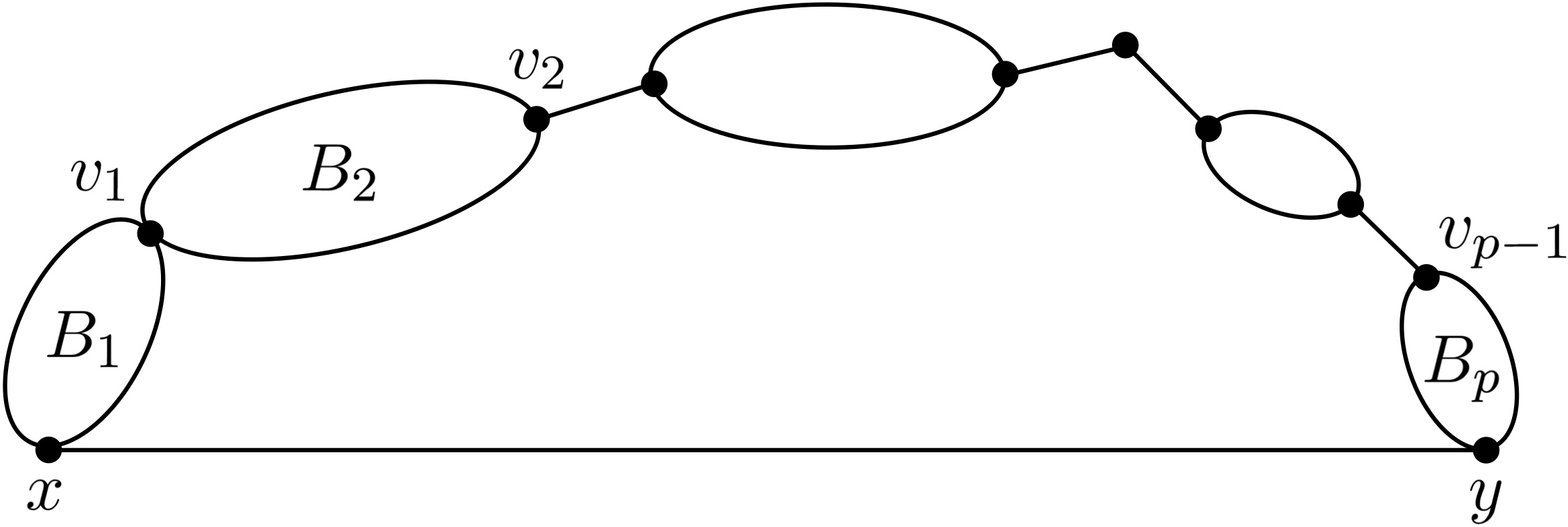}
\end{figure}
\indent\\[-0.8cm]
\begin{center}
Figure 1. The structure of $G$ in Theorem \ref{min2connthm}
\end{center}
\indent\\[-0.6cm]

Finally, we have the following well-known result about the structure of $2$-connected graphs. See for example \cite{Die18}, Ch.~3, Proposition 3.1.1.
\begin{thm}\label{eardecompthm}
A graph is $2$-connected if and only if it can be obtained from a cycle, followed by successively attaching a non-trivial path by identifying the end-vertices with two distinct vertices of the previous graph.
\end{thm}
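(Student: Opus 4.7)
The theorem has two directions. My plan for sufficiency---that any ``ear-decomposable'' graph is $2$-connected---is induction on the number of ears. The base case is a cycle on at least three vertices, which is manifestly $2$-connected. For the inductive step, let $H$ be $2$-connected and let $H' = H \cup P$, where $P$ is a non-trivial path with distinct endpoints $u, v \in V(H)$ and interior disjoint from $V(H)$. Then $|V(H')| \geq 3$, and I verify that $H' - w$ is connected for every $w \in V(H')$. If $w$ lies in the interior of $P$, then $H$ survives intact in $H' - w$, and the two subpaths of $P - w$ are each attached to it via $u$ or $v$. If $w \in V(H)$, then $H - w$ is connected by hypothesis, and $P$ (possibly with one endpoint removed) still attaches to $H - w$ via the surviving endpoint in $\{u, v\} \setminus \{w\}$.

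For necessity, let $G$ be $2$-connected. Since $\delta(G) \geq 2$, the graph $G$ contains a cycle $H_0$, and I build a chain $H_0 \subsetneq H_1 \subsetneq \cdots \subsetneq H_m = G$ of $2$-connected subgraphs of $G$, each obtained from the previous by attaching an ear. Given $H_i \neq G$, I produce the next ear as follows. If there is an edge $e \in E(G) \setminus E(H_i)$ with both endpoints in $V(H_i)$, then $e$ itself is a length-$1$ ear. Otherwise $V(H_i) \neq V(G)$, and connectivity of $G$ yields an edge $xy \in E(G)$ with $x \in V(H_i)$ and $y \notin V(H_i)$. Now $2$-connectivity is invoked: since $G - x$ is connected, there is a path $Q$ from $y$ to $V(H_i) \setminus \{x\}$ inside $G - x$. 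Choose $Q$ of minimum length; then prepending $xy$ to $Q$ gives a path $P$ from $x$ to some $z \in V(H_i) \setminus \{x\}$ whose internal vertices lie outside $V(H_i)$, which is the desired ear. By the already-proved sufficiency direction, $H_{i+1} := H_i \cup P$ is $2$-connected, and finiteness of $G$ guarantees the process terminates with $H_m = G$.

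The main obstacle is the second case of the construction in the necessity direction: one must recover a path from $y$ back to $V(H_i)$ that avoids $x$, so as to ensure that the ear's two endpoints are distinct vertices of $V(H_i)$. Mere connectivity of $G$ would only give a path from $y$ to $V(H_i)$ possibly re-using $x$; the jump to $2$-connectivity (so that $G - x$ remains connected) is exactly what the theorem needs, and it is the single essential use of the hypothesis in the whole argument.
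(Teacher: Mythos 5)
The paper does not include its own proof of this theorem---it is quoted as a well-known fact with a citation to Diestel (Ch.~3, Proposition 3.1.1)---and your argument is correct and essentially the standard proof from that source: induction on the number of ears for sufficiency, and for necessity a greedily grown chain of ear-decomposable subgraphs, with the single essential use of $2$-connectivity being exactly where you identify it, namely that $G-x$ remains connected and so yields a path from $y$ back to $V(H_i)\setminus\{x\}$, which after taking a minimum-length such path gives an ear with two distinct endpoints in $V(H_i)$ and all internal vertices outside. Your explicit chain construction is merely a constructive repackaging of the usual maximal-subgraph formulation, so there is nothing to flag.
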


In Theorem \ref{eardecompthm}, each attached path is called an \emph{ear}, and the initial cycle, together with the ears, form an \emph{ear decomposition} for the $2$-connected graph. 

Now, we present the results. Recall that $crx_k(G)$ is well-defined if and only if $G\in\mathcal F_k$. We first give characterisations of the families $\mathcal F_1$ and $\mathcal F_2$. For $k\ge 3$, we have already mentioned that characterising the graphs of $\mathcal F_k$ is a much more difficult problem.

\begin{prp}\label{crx12prp}
\indent\\[-3ex]
\begin{enumerate}
\item[(a)] $\mathcal F_1$ is the family of all graphs with the property that, every vertex belongs to a $2$-connected block in the block decomposition.
\item[(b)] $\mathcal F_2$ is the family of all $2$-connected graphs. 
\end{enumerate}
\end{prp}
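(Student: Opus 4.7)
The plan is to prove each part independently, both as straightforward applications of the block decomposition and of classical 2-connectivity facts. I would structure the argument as two pairs of implications, since each of $\mathcal F_1$ and $\mathcal F_2$ is defined by an ``if and only if''.

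For part (a), the forward direction would proceed by observing that if $v \in V(G)$ lies on a cycle $C$, then $C$ is itself 2-connected, so it is contained in a single maximal 2-connected subgraph of $G$, which is a 2-connected block containing $v$. For the reverse direction, I would let $B$ be a 2-connected block containing $v$. Since $B$ is neither an isolated vertex nor a bridge, $|V(B)| \ge 3$ and $v$ has at least two distinct neighbours $u, w$ in $B$. Because $B$ is 2-connected, $B - v$ is connected, yielding a $u$-$w$ path in $B - v$; adjoining the edges $uv$ and $vw$ produces a cycle of $G$ through $v$.

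For part (b), the forward implication would be: given $G \in \mathcal F_2$, we have $|V(G)| \ge 3$ since any cycle containing two fixed vertices has at least three vertices. Moreover, for any $v \in V(G)$ and any two other vertices $u, w$, a cycle through $u$ and $w$ either avoids $v$ (giving a $u$-$w$ path in $G - v$ directly) or contains $v$, in which case deleting $v$ from the cycle still leaves a $u$-$w$ path; hence $G - v$ is connected, and so $G$ is 2-connected. The reverse implication is precisely the case $k = 2$ of Dirac's theorem cited in the introduction; alternatively, for a 2-connected $G$ and $u, v \in V(G)$, Menger's theorem supplies two internally disjoint $u$-$v$ paths whose union is a cycle containing both.

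No part of the argument poses a serious obstacle: the proposition amounts to unpacking the definitions and invoking standard results, and it sets up the notation used in the subsequent sections. The only minor care required is with small-order edge cases (graphs on fewer than three vertices), which are handled immediately from the stipulation $|V(G)| > k$ in the definition of $k$-connectedness given earlier in the paper.
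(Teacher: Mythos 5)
Your proof is correct and takes essentially the same route as the paper: part (b) is the same Menger-based equivalence (two internally disjoint $u$--$v$ paths forming a cycle), and part (a) rests on the same block-decomposition facts. The only cosmetic differences are in (a), where you argue the forward direction directly (a cycle is $2$-connected, hence lies in a maximal $2$-connected subgraph, i.e.\ a $2$-connected block) rather than by the paper's contrapositive via bridges, and you construct the cycle through $v$ by hand from two neighbours and a path in $B-v$ where the paper simply cites Dirac's theorem; both variants are sound.
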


\begin{proof}
(a) Let $G$ be a graph with the property as stated, so that any vertex $v\in V(G)$ belongs to a $2$-connected subgraph. By Dirac's theorem \cite{Dir60}, $v$ lies in a cycle. Hence, $G\in\mathcal F_1$.

Conversely, suppose that $G$ does not have the stated property. If $G$ has an isolated vertex $u$, then $u$ does not lie in a cycle, and $G\not\in\mathcal F_1$. Otherwise, $G$ has a vertex $v$ such that, in the block decomposition of $G$, every block containing $v$ is a bridge of $G$. If $v$ lies in a cycle $C$ of $G$, then the two edges of $C$ incident with $v$ are bridges of $G$. This is a contradiction, since no bridge of $G$ can lie in a cycle of $G$. Hence, $v$ does not lie in a cycle of $G$, and $G\not\in\mathcal F_1$.\\[1ex]
\indent (b) By Menger's theorem \cite{KM27}, $G$ is $2$-connected if and only if any two vertices of $G$ have two internally vertex-disjoint paths connecting them. Equivalently, any two vertices of $G$ have a cycle containing them, which is equivalent to $G\in\mathcal F_2$.
\end{proof}

Next, recall that $crx_k(G)\le e(G)$ for $G\in\mathcal F_k$. We may ask: \emph{For what graphs $G$ do we have $crx_k(G)=e(G)$?} The analogous result for $rx_k(G)$ can be seen from Chartrand et al.~\cite{COZ10}: \emph{For a connected graph $G$ on $n$ vertices and $2\le k\le n$, we have $rx_k(G)=e(G)$ if and only if $G$ is a tree.} Here, we have the following result.
\newpage
\begin{thm}\label{crxk=m}
Let $G$ be a graph of order $n\ge 3$.
\begin{enumerate}
\item[(a)] Let $G\in\mathcal F_1$. Then $crx_1(G)=e(G)$ if and only if $G$ is the cycle $C_n$.
\item[(b)] Let $G\in\mathcal F_2$. Then $crx_2(G)=e(G)$ if and only if $G$ is minimally $2$-connected.
\item[(c)] Let $G\in \mathcal F_n$ (equivalently, $G$ is Hamiltonian), and $1\le k\le n$. Then $crx_k(G)=e(G)$ if and only if $G$ is the cycle $C_n$.
\end{enumerate}
\end{thm}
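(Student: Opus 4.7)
I prove the three parts separately, handling the immediate directions first. When $G = C_n$ the only cycle of $G$ is $G$ itself, so every $k$-rainbow cycle colouring uses all $e(G)=n$ colours distinctly, settling the reverse directions of (a) and (c); for the easy side of (b), if $G \in \mathcal F_2$ is not minimally $2$-connected, some edge $e$ leaves $G - e$ still $2$-connected (hence in $\mathcal F_2$), giving $crx_2(G) \le crx_2(G-e) \le e(G)-1$. Part (c)'s forward direction is equally painless: if $G$ is Hamiltonian with $G \ne C_n$, colour a Hamilton cycle $H$ rainbow with $n$ colours and assign every other edge any colour already appearing on $H$; then for $k \le n$, every $k$ vertices lie on the rainbow cycle $H$, so $crx_k(G) \le n < e(G)$.

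For part (a), assume $G \in \mathcal F_1$ with $G \ne C_n$ and aim for a rainbow cycle colouring with at most $e(G)-1$ colours. If $G$ has more than one block, every vertex lies (by $\mathcal F_1$) in some $2$-connected block; colour each such block $B$ with an optimal rainbow cycle colouring using $crx_1(B) \le e(B)$ colours, sharing the same palette across blocks, and give each bridge an arbitrary colour from the palette. Each vertex's witness rainbow cycle lies inside its $2$-connected block, and the total palette size is $\max_B e(B) < e(G)$. Otherwise $G$ is a single $2$-connected block; fix an ear decomposition $C_0,P_1,\dots,P_r$ with $r \ge 1$. If the last ear $P_r$ is a single edge $e^\ast$, then $G - e^\ast$ is $2$-connected (hence in $\mathcal F_1$), so $crx_1(G) \le crx_1(G - e^\ast) \le e(G)-1$. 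Otherwise $P_r$ has length at least $2$; pick any $e_1 \in E(P_r)$ and any $e_2 \in E(G)\setminus E(P_r)$, assign them a common colour, and give every other edge a distinct new colour. Each vertex outside $\mathrm{int}(P_r)$ finds its rainbow cycle inside $G' := G - \mathrm{int}(P_r)$, which is $2$-connected with all its edges distinctly coloured. Each interior vertex $v$ of $P_r$ has degree $2$, so every cycle through $v$ must traverse all of $P_r$; but since $G'$ is $2$-edge-connected, $G' - e_2$ remains connected and yields a path between the endpoints of $P_r$ avoiding $e_2$, producing a rainbow cycle through $v$.

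The main obstacle is the forward direction of (b): if $G$ is minimally $2$-connected then $crx_2(G)=e(G)$. Suppose for contradiction that a $2$-rainbow cycle colouring uses fewer than $e(G)$ colours, so distinct edges $e_1=x_1y_1$ and $e_2=x_2y_2$ share a colour. The crucial auxiliary is Plummer's theorem that in a minimally $2$-connected graph the vertices of degree $\ge 3$ form an independent set, so every edge has at least one endpoint of degree $2$; this is not immediate from Theorem \ref{min2connthm} but can be extracted from it by noting that if $xy$ were an edge with both endpoints of degree $\ge 3$, then both end-blocks of $G-xy$ would be $2$-connected and one would contain a redundant edge whose removal leaves $G$ still $2$-connected, contradicting minimality. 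Choose degree-$2$ endpoints $y_1$ of $e_1$ and $y_2$ of $e_2$; every cycle through $y_i$ then uses both its incident edges, in particular $e_i$. If $y_1 = y_2$, then $e_1$ and $e_2$ are the two edges at this common vertex, and any cycle through $\{y_1, z\}$ for some $z \ne y_1$ (existing by $2$-connectedness) uses both; if $y_1 \ne y_2$, any cycle through the pair $\{y_1, y_2\}$ uses both. In either case the chosen pair has no rainbow cycle, a contradiction.
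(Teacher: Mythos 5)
Your treatments of part (a), part (c), and the easy direction of (b) are correct and essentially follow the paper's route (your ear-decomposition case split in (a) is a harmless variant of the paper's argument). The genuine gap is in the hard direction of (b), where your entire argument rests on the claim that in a minimally $2$-connected graph the vertices of degree at least $3$ form an independent set, so that every edge has an endpoint of degree $2$. This claim is false. Let $G$ consist of two $4$-cycles $xa_1va_2x$ and $yb_1vb_2y$ sharing only the vertex $v$, together with the edge $xy$ (so $n=7$, $e(G)=9$). One checks directly that $G$ is $2$-connected and that none of its six cycles has a chord; since a $2$-connected graph is minimally $2$-connected if and only if no cycle has a chord, $G$ is minimally $2$-connected. (Alternatively: deleting $xy$ creates the cut-vertex $v$, and deleting any other edge creates a vertex of degree $1$.) Yet $\deg(x)=\deg(y)=3$ and $xy\in E(G)$. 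Your parenthetical derivation of the claim from Theorem \ref{min2connthm} is exactly where things break: in this example both end-blocks of $G-xy$ are indeed $2$-connected (the two $4$-cycles), but no edge of $G$ is redundant. What is actually true in this direction is Mader's weaker theorem that the vertices of degree at least $3$ in a minimally $k$-connected graph induce a \emph{forest} -- in the example they induce the single edge $xy$ -- and a forest is not enough for your argument.

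Concretely, your proof cannot even begin when the two same-coloured edges include an edge like $e_1=xy$ above: there is no degree-$2$ endpoint $y_1$ to choose, and the mechanism ``every cycle through $y_i$ uses $e_i$'' is unavailable. The conclusion you need -- for \emph{any} two edges $e,e'$ of the same colour, two vertices such that every cycle through them uses both $e$ and $e'$ -- is still true, but it requires the paper's substantially heavier argument: apply Theorem \ref{min2connthm} to $G-e$ to get the block chain $B_1,\dots,B_p$ with $e=x_0x_p$; locate the block $B_\ell$ containing $e'$; if $B_\ell$ is a bridge the pair $x_0,x_\ell$ works, and otherwise $B_\ell$ is itself minimally $2$-connected, so Theorem \ref{min2connthm} applies again to $B_\ell-e'$, and a case analysis on where the cut-vertices $x_{\ell-1},x_\ell$ of the outer chain sit within the inner chain $D_1,\dots,D_q$ produces the forcing pair (e.g.\ $x_0$ and $y_q$, or $x_0$ and $y_0$). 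Your degree-$2$ shortcut is not a simplification of this argument but an incorrect replacement for it, so the forward implication of (b) is unproven as written.
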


\begin{proof}
(a) If $G=C_n$, then clearly we have $crx_1(G)=e(G)$. 

Conversely, suppose that $G\neq C_n$. If $G$ is $2$-connected, then by Theorem \ref{eardecompthm}, $G$ has an ear decomposition with at least one ear. Let $P$ be the final ear attached, and $G'$ be the subgraph of $G$ before $P$ is attached. Let $u,v\in V(G')$ be the end-vertices of $P$, and $e\in E(P)$. Let $P'\subset G'$ be a $u-v$ path. Now, assign a rainbow colouring to $G-e$, and then assign a used colour not in $P\cup P'$ to $e$. We have a rainbow cycle colouring of $G$ with fewer than $e(G)$ colours, since every vertex of $G'$ lies in a rainbow cycle in $G'$ (since $G'$ is $2$-connected), and every vertex of $P$ lies in the rainbow cycle $P\cup P'$.

Otherwise, $G$ is not $2$-connected, so that the block decomposition of $G$ consists of at least two blocks. By Proposition \ref{crx12prp}(a), no block is an isolated vertex of $G$, and every vertex of $G$ belongs to a $2$-connected block. We colour the edges of $G$ in such a way that every block is rainbow coloured, and two of the blocks have a colour in common. This is a rainbow cycle colouring with fewer than $e(G)$ colours, since any vertex lies in a rainbow cycle in a $2$-connected block which contains the vertex.\\[1ex]
\indent (b) Suppose that $G$ is not minimally $2$-connected. Then there exists $e\in E(G)$ such that $G-e$ is $2$-connected. We assign a rainbow colouring to $G-e$, and then assign a used colour to $e$. This is a $2$-rainbow cycle colouring using fewer than $e(G)$ colours.

Conversely, suppose that $G$ is minimally $2$-connected, and we have a colouring of $G$ with fewer than $e(G)$ colours. Then there exist $e,e'\in E(G)$ with the same colour. By Theorem \ref{min2connthm}, the block decomposition of $G-e$ has the following structure. The blocks are $B_1,\dots, B_p$ for some $p\ge 2$, and can be ordered linearly so that $V(B_i)\cap V(B_{i+1})=\{x_i\}$ for $1\le i<p$, where $x_1,\dots,x_{p-1}$ are all the cut-vertices of $G-e$, and the end-blocks are $B_1$ and $B_p$. Moreover, we have $e=x_0x_p$, where $x_0\in V(B_1)\setminus\{x_1\}$ and $x_p\in V(B_p)\setminus\{x_{p-1}\}$.
%
%
Let $e'\in E(B_\ell)$. We may assume that $1<\ell\le p$. If $B_\ell$ is a bridge of $G-e$, then $e'=x_{\ell-1}x_{\ell}$, and any cycle of $G$ containing $x_0$ and $x_{\ell}$ must use both $e$ and $e'$. Otherwise, we have $B_\ell$ is minimally $2$-connected, since $G$ is minimally $2$-connected. Again by Theorem \ref{min2connthm}, the blocks of $B_\ell-e'$ can be linearly ordered as $D_1,\dots, D_q$ for some $q\ge 2$, where $V(D_j)\cap V(D_{j+1})=\{y_j\}$ for $1\le j<q$, and $y_1,\dots,y_{q-1}$ are all the cut-vertices of $B_\ell-e'$. Moreover, we have $e'=y_0y_q$, where $y_0\in V(D_1)\setminus\{y_1\}$ and $y_q\in V(D_q)\setminus\{y_{q-1}\}$. Now, we have $x_{\ell-1},x_\ell\in V(B_\ell)$. We may assume that $x_{\ell-1}\in V(D_s)$ and $x_\ell\in V(D_t)$ for some $1\le s\le t\le q$. Note that $s$ and $t$ may possibly be not unique. If we must have $s=1$ and $t=q$, then $G-e'$ would be $2$-connected, a contradiction. If $1\le s\le t<q$, for a cycle $C$ to contain $x_0$ and $y_q$, emanating from $x_0$, we must use the edge $e$ and a path in $B_1$ connecting $x_0$ and $x_1$; and emanating from $y_q$, we must use the edge $e'$ and a path in $D_q$ connecting $y_q$ and $y_{q-1}$. Hence $C$ must use both $e$ and $e'$. If $1< s\le t\le q$, for a cycle $C$ to contain $x_0$ and $y_0$, again $C$ must use $e$; and emanating from $y_0$, we must use the edge $e'$ and a path in $D_1$ connecting $y_0$ and $y_1$. Again, $C$ must use both $e$ and $e'$.

We conclude that there are always two vertices of $G$ such that, any cycle in $G$ containing them must use both $e$ and $e'$. Hence, there is no $2$-rainbow cycle colouring with fewer than $e(G)$ colours, and  $crx_2(G)=e(G)$.\\[1ex]
\indent (c)  If $G=C_n$, then clearly we have $crx_k(G)=e(G)$. Conversely, suppose that $G\neq C_n$. Since $G$ is Hamiltonian, we have $e(G)>n$. By taking an $n$-colouring of $G$ such that a Hamilton cycle is rainbow, we have $crx_k(G)\le n$. Therefore, $crx_k(G)<e(G)$.
\end{proof}

For $3\le k<n$, the problem of classifying all graphs $G\in\mathcal F_k$ of order $n$ with $crx_k(G)=e(G)$ seems to be more difficult. The case when $k=n-1$ seems interesting. In this case, if $G$ is Hamiltonian, then we must have $G=C_n$. Otherwise, $G$ must be hypohamiltonian, i.e., $G$ is not Hamiltonian, and $G-v$ is Hamiltonian for every $v\in V(G)$. Classifying hypohamiltonian graphs with a fixed order is a problem of significant interest. The hypohamiltonian graph with the smallest order is the Petersen graph ${\sf P}_{10}$, with $10$ vertices. Aldred, McKay and Wormald \cite{AMW97} found all hypohamiltonian graphs of order at most $17$. There is one such graph when the order is $10$ (the Petersen graph), $13$ or $15$, there are four graphs of order $16$, and none with orders $11,12,14,17$. These graphs are shown in Figure 2.
\indent\\[-0.1cm]
\begin{figure}[htp]
\centering
\includegraphics[width=15cm]{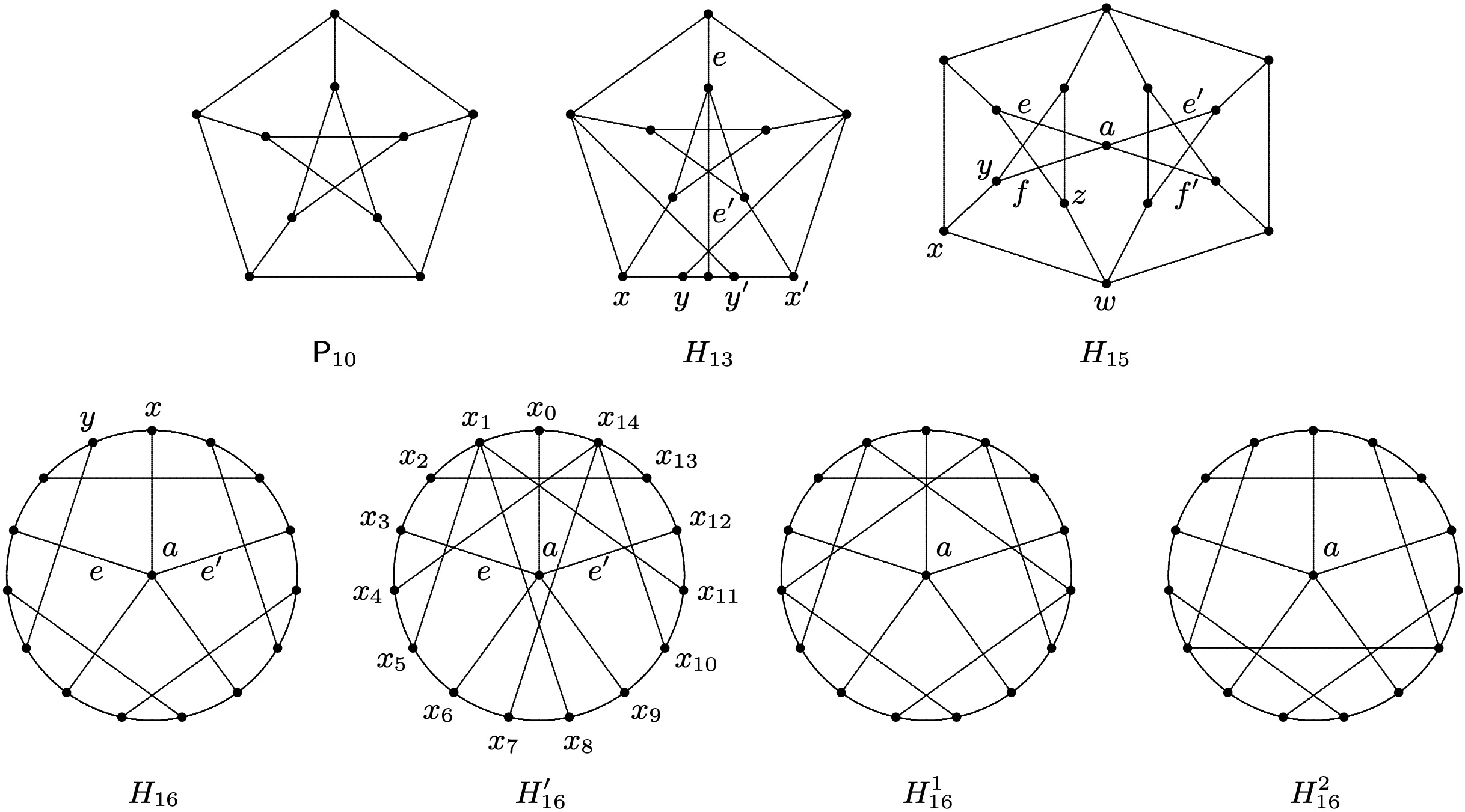}
\end{figure}
\indent\\[-0.8cm]
\begin{center}
Figure 2. Hypohamiltonian graphs with at most $17$ vertices
\end{center}
\indent\\[-0.2cm]
For the four graphs of order $16$, an edge connecting the centre vertex $a$ to the outside cycle is a \emph{spoke edge}. In the next result, we show that, with the exception of the Petersen graph, we have $crx_{n-1}(G)<e(G)$ if $G$ is one of the graphs in Figure 2 and has order $n$.

\begin{thm}\label{hypothm}
\indent\\[-3ex]
\begin{enumerate}
\item[(a)] If ${\sf P}_{10}$ is the Petersen graph, then $crx_9({\sf P}_{10})=e({\sf P}_{10})=15$.
\item[(b)] If $G$ is a hypohamiltonian graph of order $n$, where $11\le n\le 17$, then $crx_{n-1}(G)<e(G)$.
\end{enumerate}
\end{thm}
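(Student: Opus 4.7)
Since ${\sf P}_{10}$ is hypohamiltonian, any cycle through a $9$-set $V({\sf P}_{10})\setminus\{v\}$ must be a Hamilton cycle of ${\sf P}_{10}-v$, so the first task is to describe those Hamilton cycles. Using the standard labelling (outer pentagon $0,\ldots,4$, inner pentagram $5,\ldots,9$, spokes $i\leftrightarrow i+5$) and vertex-transitivity, it suffices to consider $v=0$. The three vertices of $N(0)=\{1,4,5\}$ have degree $2$ in ${\sf P}_{10}-0$, forcing their six incident edges into every Hamilton cycle; these six forced edges form three disjoint paths of length $2$, and a direct case analysis of the ways of joining the six path-endpoints by three further edges into a single $9$-cycle yields exactly two Hamilton cycles. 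Consequently ${\sf P}_{10}-v$ has exactly two Hamilton cycles, whose common edge set is
\[
F(v):=\{xy\in E({\sf P}_{10}):\{x,y\}\cap N(v)\neq\emptyset,\ v\notin\{x,y\}\},
\]
which has size $6$.

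\textbf{Key lemma.} \emph{For any two edges $e,e'\in E({\sf P}_{10})$ there is a vertex $v$ with $e,e'\in F(v)$.} An edge $uw$ lies in $F(v)$ iff $v\in (N(u)\cup N(w))\setminus\{u,w\}$, a set of size exactly $4$ because ${\sf P}_{10}$ is $3$-regular and triangle-free. By the edge-transitivity of ${\sf P}_{10}$ one may fix $e=01$, for which the relevant $4$-set is $\{2,4,5,6\}$; a short inspection then shows that the corresponding $4$-set of every other edge $e'$ meets $\{2,4,5,6\}$.

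\textbf{Finishing (a).} Assume a $14$-colouring is a $9$-rainbow cycle colouring. Two edges $e,e'$ share a colour; choose $v$ by the key lemma so that $e,e'\in F(v)$. Both Hamilton cycles of ${\sf P}_{10}-v$ then contain $e$ and $e'$, so neither is rainbow, contradicting the assumption that $V\setminus\{v\}$ lies in a rainbow cycle. Combined with the trivial bound $crx_9({\sf P}_{10})\le e({\sf P}_{10})=15$, this gives $crx_9({\sf P}_{10})=15$.

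\textbf{Plan for (b) and main obstacle.} For each of the six remaining hypohamiltonian graphs $G$ in Figure~2 (one of order $13$, one of order $15$, four of order $16$) the aim is to exhibit a pair of distinct edges $e,e'$ such that for every $v\in V(G)$ some Hamilton cycle of $G-v$ avoids at least one of $e,e'$; colouring $e$ and $e'$ identically and all remaining edges distinctly then produces an $(n-1)$-rainbow cycle colouring in $e(G)-1$ colours. For the four order-$16$ graphs the centre vertex $a$ with its spokes provides natural candidates: $G-v$ has several Hamilton cycles rerouting through $a$ in different ways, so one may take $e$ to be a well-chosen spoke and $e'$ a nearby non-spoke. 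For the graphs of orders $13$ and $15$ a pair is identified by direct inspection. The structural contrast with ${\sf P}_{10}$ is that, for each $v$, these graphs admit at least three distinct Hamilton cycles in $G-v$ with small common edge-intersection, so avoiding two prescribed edges is easy. The main obstacle here is not conceptual but combinatorial bookkeeping: for each of the six graphs one must enumerate enough Hamilton cycles of each $G-v$ to justify the chosen pair $\{e,e'\}$, and the pair is graph-dependent; this finite case-check is the principal labour of part~(b).
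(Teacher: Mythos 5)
Your part (a) is correct and takes essentially the paper's approach: both arguments reduce a repeated colour on edges $e,e'$ to a vertex $v$, not an end-vertex of $e$ or $e'$ but adjacent to an end-vertex of each, so that in any Hamilton cycle of ${\sf P}_{10}-v$ (the only kind of cycle through the other nine vertices, by hypohamiltonicity) the degree-$2$ neighbours of $v$ force both $e$ and $e'$ into the cycle. The paper constructs $v$ by a two-case ad hoc choice, whereas you package its existence as a key lemma checked via edge-transitivity; your lemma is true (I verified the $4$-set of every edge meets $\{2,4,5,6\}$), and this is a slightly cleaner justification of a step the paper leaves to inspection. Your claim that ${\sf P}_{10}-v$ has exactly two Hamilton cycles is also correct but unnecessary: the degree-two forcing already places all of $F(v)$ in every Hamilton cycle of ${\sf P}_{10}-v$, which is all the argument needs.

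Part (b) has a genuine gap. Your strategy is exactly the paper's — give $e,e'$ a common colour, all other edges distinct colours, and show that for every $v$ some Hamilton cycle of $G-v$ avoids at least one of $e,e'$ (your ``at least one'' criterion is the right one, and it also disposes automatically of the case where $v$ is an end-vertex of $e$ or $e'$). But a strategy is all you give: you never exhibit a concrete valid pair $\{e,e'\}$ for any of the six graphs, nor verify the per-vertex Hamilton cycle condition, and you say yourself that this finite check is ``the principal labour of part (b)'' — yet that labour \emph{is} the proof of (b). The paper carries it out by displaying explicit Hamilton cycles (Figure 3) and propagating them by symmetry, and the details are not as routine as your proposal suggests: for $H_{15}$ the displayed cycles use either $e,f'$ or $e',f$ for a companion pair $f,f'$ exchanged with $e,e'$ by an automorphism; for $H_{16}$ one needs cycles through exactly two \emph{consecutive} spoke edges (note that for $v\neq a$ every Hamilton cycle of $G-v$ must pass through the centre $a$ via exactly two spokes, so a spoke candidate for $e$ must be checked to be avoidable for every such $v$); and for $H_{16}'$ one composes two explicit automorphisms $\tau$ and $\sigma$ to move known cycles to all remaining vertices. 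Your unsupported assertion that each $G-v$ has ``at least three Hamilton cycles with small common edge-intersection'' is precisely the statement that needs proof. You also miss the paper's labour-saving reduction: since $H_{16}$ is a spanning subgraph of $H_{16}^1$ and $H_{16}^2$, one has $crx_{15}(H_{16}^i)\le crx_{15}(H_{16})<e(H_{16})<e(H_{16}^i)$, so only four of the six graphs need be treated directly.
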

%
%
%
\begin{proof}
(a) Suppose that we have a colouring of ${\sf P}_{10}$, using fewer than $e({\sf P}_{10})=15$ colours. Then, two edges $e, e'$ have the same colour. We choose a vertex $v$ as follows. If $e$ and $e'$ are adjacent, say at $u$, then we let $v$ be the vertex adjacent to $u$ and not in $e$ or $e'$. If $e$ and $e'$ are not adjacent, then we choose $v$, not in $e$ or $e'$, which is adjacent to an end-vertex of $e$ and $e'$. Now, since ${\sf P}_{10}$ is hypohamiltonian, any cycle of ${\sf P}_{10}$ containing all vertices of ${\sf P}_{10}-v$ must use $e$ and $e'$, and hence is not rainbow. Therefore, $crx_9({\sf P}_{10})=e({\sf P}_{10})=15$.\\[1ex]
\indent(b) We show that if $G$ is one of the remaining graphs in Figure 2, with order $n$, then $crx_{n-1}(G)<e(G)$. Since $H_{16}$ is a spanning subgraph of both $H_{16}^1$ and $H_{16}^2$, the results for $H_{16}^1$ and $H_{16}^2$ follow from that of $H_{16}$. For the remaining four graphs, consider the colouring of $G$ where the edges $e,e'$ as shown in Figure 2 have the same colour, and the remaining edges have further distinct colours. We claim that this is an $(n-1)$-rainbow cycle colouring for $G$. It suffices to show that $G-v$ contains a Hamilton cycle not using $e$ or $e'$ for every $v\in V(G)$, which therefore is a rainbow cycle. Clearly, it suffices to check this property when $v$ is not an end-vertex of $e$ or $e'$.

First, consider $G=H_{13}$. Note that $H_{13}$ contains a subdivided Petersen graph as a spanning subgraph, and not using $e'$. Then, $H_{13}-v$ contains a Hamilton cycle not using $e'$ for $v\in V(H_{13})\setminus\{x,x',y,y',z\}$. Figure 3(a) shows that both $H_{13}-x$ and $H_{13}-y$ (and thus also $H_{13}-x'$ and $H_{13}-y'$ by symmetry) contains a Hamilton cycle, not using $e$ or $e'$.

Next, consider $G=H_{15}$. Figure 3(b) shows that each of $H_{15}-w, H_{15}-x, H_{15}-y$ and $H_{15}-z$ contains a Hamilton cycle, using either $e,f'$ or $e',f$. Hence by symmetry, $H_{15}-v$ contains a Hamilton cycle not using $e$ or $e'$, for every $v\in V(H_{15})\setminus\{a\}$.

Next, consider $G=H_{16}$. Figure 3(c) shows that both $H_{16}-x$ and $H_{16}-y$ contains a Hamilton cycle that uses exactly two consecutive spoke edges. By symmetry, the same is true for $H_{16}-v$, for every $v\in V(H_{16})\setminus\{a\}$. Any such cycle does not use $e$ or $e'$. 

Finally, consider $G=H_{16}'$. Figure 3(d) shows that $H_{16}'-x_i$ contains a Hamilton cycle not using $e$ or $e'$, for $0\le i\le 3$. We have the automorphisms $\tau$ and $\sigma$ on $H_{16}'$, where $\tau$ is the obvious symmetry; and $\sigma$ fixes $a,x_0,x_1,x_{14}$, $\sigma(x_i)=x_{i+3}$ for $2\le i\le 10$, and $\sigma(x_i)=x_{i-9}$ for $11\le i\le 13$. By using a composition of $\tau$ and $\sigma$, for $4\le i\le 14$, it is easy to obtain a Hamilton cycle of $H_{16}'-x_i$ which uses the spoke edge $ax_0$, and hence not using $e$ or $e'$.
%
\end{proof}
\indent\\[-0.8cm]
\begin{figure}[htp]
\centering
\includegraphics[width=14.9cm]{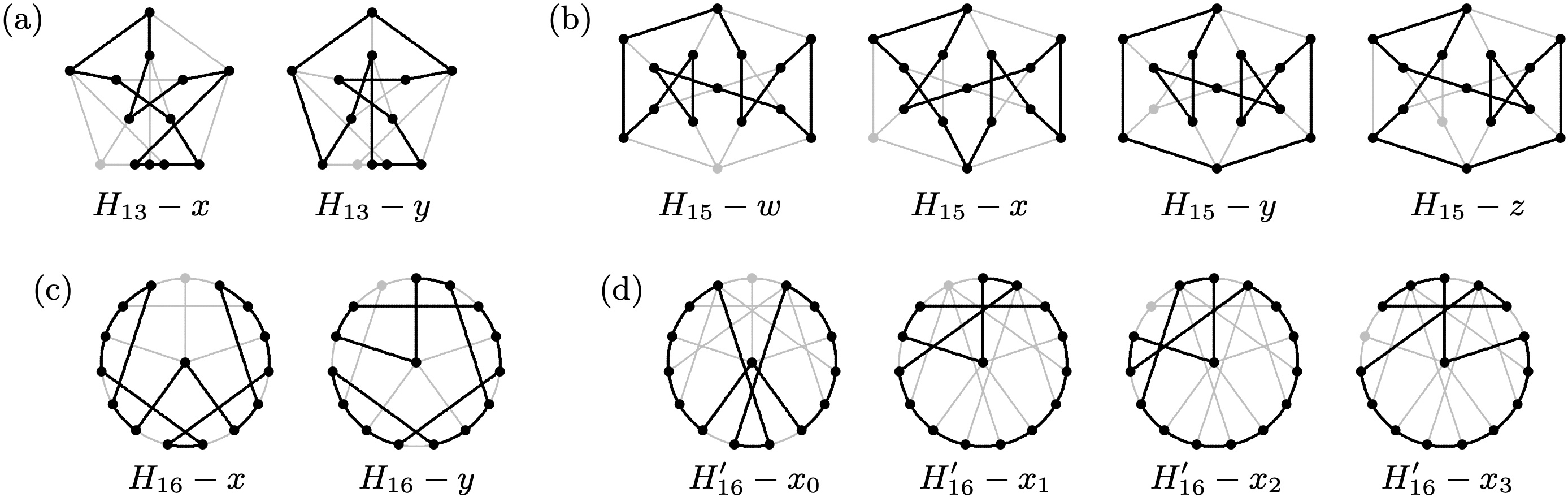}
\end{figure}
\indent\\[-0.8cm]
\begin{center}
Figure 3. Cycles in hypohamiltonian graphs
\end{center}
\indent\\[-0.2cm]
%
%
We propose the following problem.
\begin{prob}
Let $3\le k<n$. Characterise the graphs $G\in\mathcal F_k$ of order $n$ with $crx_k(G)=e(G)$. In particular, when $k=n-1$, does there exist $G$ such that $crx_{n-1}(G)=e(G)$, other than when $G$ is the cycle $C_n$, or $n=10$ and $G$ is the Petersen graph?
\end{prob}
%
%
%
%

We conclude this section by comparing the parameters $crx_k(G)$ and $rx_k(G)$. It is easy to see that for $k\ge 1$ and $G\in\mathcal F_k$, we have $crx_k(G)>rx_k(G)$. Indeed, we may take a $k$-rainbow cycle colouring of $G$ with $t=crx_k(G)\ge 3$ colours, and recolour all edges of colour $t$ with colour $1$. In the new $(t-1)$-colouring, any $k$ vertices lie on a rainbow path. Hence, $rx_k(G)\le crx_k(G)-1$.

How close and how far apart can the parameters $crx_k(G)$ and $rx_k(G)$ be? In Theorem \ref{septhm} below, we see that the difference $crx_k(G)-rx_k(G)$ can be small (at most $3$) for infinitely many graphs, and can also be arbitrarily large.

\begin{thm}\label{septhm}
Let $k\ge 1$.
\begin{enumerate}
\item[(a)] There are infinitely many graphs $G$ such that 
\[
crx_k(G)-rx_k(G)=
\left\{
\begin{array}{l@{\quad}l}
3, & \textup{\emph{if $k=1$,}}\\[0.5ex]
2, & \textup{\emph{if $k=2,3$,}}\\[0.5ex]
1, & \textup{\emph{if $k\ge 4$.}}
\end{array}
\right.
\]
\item[(b)] For every $c\ge 1$, there exists a graph $G$ such that $rx_k(G)\le k^2-1$ and $crx_k(G)\ge c$. Hence, the difference $crx_k(G)-rx_k(G)$ can be arbitrarily large.
\end{enumerate}
\end{thm}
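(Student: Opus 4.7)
Both parts will be proved by exhibiting explicit families of graphs whose indices can be computed or estimated directly.

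For part~(a), we first observe that the stated differences are already the smallest possible: we always have $crx_k(G)\ge rx_k(G)+1$, and for $k=1$ additionally $crx_1(G)\ge g(G)\ge 3$ while $rx_1\equiv 0$, so the gaps $3,2,2,1,1,\ldots$ cannot be improved. The task therefore reduces to realising each minimum gap infinitely often. For $k=1$, the plan is to use the friendship graphs $F_n=K_1+nK_2$: colouring the three edges of each of the $n$ triangles with the three colours $\{1,2,3\}$ makes every vertex lie in a rainbow triangle, so $crx_1(F_n)=3$ and the difference from $rx_1(F_n)=0$ is~$3$. For $k\in\{2,3\}$, we plan to pick an infinite subfamily of one of the specific classes studied in Section~\ref{specsec} (wheels, complete bipartite or multipartite graphs) whose $crx_k$ and $rx_k$, read off from the formulas of that section, differ by exactly~$2$. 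Similarly, for $k\ge 4$, we plan to use a family (complete graphs of large order being a natural candidate) whose parameters differ by exactly~$1$.

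For part~(b), fix $k$ and $c\ge 1$; we need a graph $G=G_{k,c}\in\mathcal{F}_k$ with $rx_k(G)\le k^2-1$ and $crx_k(G)\ge c$. The plan is to combine in a single graph two complementary structures: a \emph{Steiner-friendly core} that supplies a rainbow Steiner tree of bounded size for every $k$-subset (controlling $rx_k$), together with a \emph{long peripheral substructure} containing a distinguished $k$-subset $S$ through which every cycle must traverse many edges (forcing $crx_k\ge c$). Concretely, we plan to take a small dense hub (for instance a $K_{k^2}$ or a suitable complete bipartite graph) and attach to it a long substructure---a long subdivision, a generalised theta graph with long branches, or a high-girth cage-like piece---so that $S$ admits short rainbow Steiner trees into the hub but any cycle of $G$ through $S$ is compelled to traverse at least $c$ edges of the periphery. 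An edge-colouring using at most $k^2-1$ colours, split between the hub and the periphery, will be designed so that for every $k$-subset at least one of the available Steiner trees is rainbow.

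The hard part will be to reconcile the two competing requirements. The lower bound $rx_k(G)\ge\mathrm{diam}(G)$ (obtained by embedding any diametral pair into a $k$-set and noting that a rainbow Steiner tree contains a rainbow path between them) forces $G$ to have bounded diameter, yet we must still produce a $k$-subset whose cycles are all long. The resolution will come from placing $S$ close to the hub in graph distance, so short rainbow Steiner trees exist, while arranging that every $S$-containing cycle must leave the hub and traverse the long periphery; the colour count $k^2-1$ will arise from carefully dividing the colour budget between colours servicing the hub and colours handling possible short extensions into the periphery.
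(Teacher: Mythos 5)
Part (a) of your plan contains a concrete error and is otherwise unexecuted. Your friendship-graph example for $k=1$ is fine (the paper instead uses $K_n$, via $crx_1(K_n)=3$ from Theorem \ref{crxkKnthm}(a) and the convention $rx_1\equiv 0$). But for $k\ge 4$ your candidate family, complete graphs, gives the wrong difference: by Theorem \ref{crxkKnthm}(b) we have $crx_k(K_n)=2k-1$ for large $n$, while $rx_k(K_n)=k$ for large $n$ (the remark after that theorem, established by Cai et al.~\cite{CLS13}), so $crx_k(K_n)-rx_k(K_n)=k-1\ge 3$, not $1$. The family that realises difference $1$ --- and the one the paper uses for both $k=3$ and $k\ge 4$ --- is the cycle $C_n$: trivially $crx_k(C_n)=e(C_n)=n$, since the only cycle available is $C_n$ itself, while $rx_3(C_n)=n-2$ and $rx_k(C_n)=n-1$ for $4\le k\le n$ by Chartrand et al.~(\cite{COZ10}, Theorem 2.1). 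Your parenthetical list for $k=2,3$ (wheels, complete bipartite or multipartite graphs) would also mostly fail if checked: e.g.\ $crx_2(K_{m,n})=8$ while known results give $rx_2(K_{m,n})=4$ for $m\ge 4$ and large $n$ (difference $4$), and $crx_2(W_n)=\lceil\frac{n}{2}\rceil+2$ against a bounded $rx_2(W_n)$, so that gap grows. What works for $k=2$ is $K_n$ itself ($crx_2(K_n)=3$, $rx_2(K_n)=1$) --- a family you invoke only for $k\ge 4$, where it fails. Finally, your opening claim that the gap $2$ is best possible for $k=2,3$ is justified nowhere (the paper proves only $crx_k>rx_k$), though the theorem as stated does not need it.

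Part (b) names the right tension but your proposed ingredients cannot resolve it. If the long periphery (a long subdivision, a theta graph with long branches, a cage-like piece) meets the hub in a bounded set of vertices, then its interior vertices are at distance proportional to the branch length from everything else, so $\mathrm{diam}(G)$ exceeds $k^2-1$ and, by your own inequality $rx_k(G)\ge\mathrm{diam}(G)$, the bound $rx_k(G)\le k^2-1$ is unattainable. If instead you join the periphery completely to the hub to keep the diameter small, then a hub of size $k^2$ (your $K_{k^2}$) ruins the other bound: a cycle through a $k$-set $S$ on the periphery has $k$ gaps between consecutive $S$-vertices, and with at least $k$ hub vertices available, every gap can be shortcut through a distinct hub vertex, producing a short cycle through $S$, so $crx_k$ stays bounded. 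The paper resolves this with a quantitative choice missing from your sketch: $G=P+C$, the join of a path $P$ on \emph{exactly} $k-1$ vertices with a cycle $C$ on $kt$ vertices. Since $|V(P)|=k-1<k=|S|$, a pigeonhole argument shows that any cycle through the equally spaced set $S=\{v_0,v_t,\dots,v_{(k-1)t}\}$ must traverse a full arc of $t$ consecutive cycle vertices, whence $crx_k(G)\ge t$; and the budget $k^2-1$ arises precisely from this hub size, namely $k$ colours on the edges from each of the $k-1$ path vertices to $C$ (colouring $u_jv_i$ with $\bar{i}+jk$ where $\bar{i}\equiv i \pmod k$), $k-2$ colours on $P$, and one colour on all of $C$, giving $(k-1)k+(k-2)+1=k^2-1$, followed by a genuine case analysis exhibiting a rainbow tree for every $k$-set. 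Without pinning the hub to $k-1$ vertices and joining it to the entire long cycle, neither the lower bound on $crx_k$ nor the upper bound on $rx_k$ in your outline goes through.
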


\begin{proof}
(a) In Theorem \ref{crxkKnthm} in Section \ref{specsec}, we will see that $crx_1(K_n)=crx_2(K_n)=3$ for $n\ge 3$. Hence, $crx_1(K_n)-rx_1(K_n)=3$ and $crx_2(K_n)-rx_2(K_n)=2$ for $n\ge 3$.

A result of Chartrand et al.~(\cite{COZ10}, Theorem 2.1) states that $rx_3(C_n)=n-2$ for $n\ge 4$, and $rx_k(C_n)=n-1$ for $4\le k\le n$. Hence, $crx_3(C_n)-rx_3(C_n)=2$ for $n\ge 4$, and $crx_k(C_n)-rx_k(C_n)=1$ for $4\le k\le n$.\\[1ex]
\indent (b) Let $P$ be a path on $k-1$ vertices, and $C$ be a cycle on $kt\ge 3$ vertices. Let $G=P+C$, the join of $P$ and $C$. Note that for $k=1$, $G$ is the cycle $C_t$. Let $V(P)=\{u_0,\dots, u_{k-2}\}$, and $V(C)=\{v_0,v_1,\dots,v_{kt-1}\}$, with the indices of the vertices of $C$ taken cyclically modulo $kt$. If $k=1$, then $crx_1(G)=t\ge c$ if $t$ is sufficiently large, and $rx_1(G)=0$. From now on, let $k\ge 2$.

First, let $S=\{v_0,v_t,v_{2t}\dots,v_{(k-1)t}\}$. It is easy to see that any cycle of $G$ containing $S$ must contain the vertices $v_{it},v_{it+1},\dots, v_{(i+1)t-1}$, for some $0\le i\le k-1$. Hence, we have $crx_k(G)\ge t\ge c$ if $t$ is sufficiently large.

Now, define a colouring $\phi$ of $G$, using colours $0,1,2,\dots, k^2-2$, as follows. For $0\le i\le kt-1$, let $0\le \bar{i}\le k-1$ be the integer with $\bar{i}\equiv i$ (mod $k)$. For $0\le j\le k-2$, let $\phi(u_jv_i)=\bar{i}+jk$. For $0\le j\le k-3$, let  $\phi(u_ju_{j+1})=k^2-k+j$. Let $\phi(e)=k^2-2$ for every edge $e$ of $C$. We claim that any $k$ vertices are connected by a rainbow tree. It suffices to consider a set $S$ of $k$ vertices in $C$, and find a rainbow tree $T$ containing $S$ and using all edges of $P$. Let $S_1,\dots,S_r$ be the sets $S\cap\{v_{\ell k},v_{\ell k+1},\dots,v_{(\ell+1)k-1}\}$ which are non-empty, for $0\le\ell\le t-1$. Note that $1\le r\le k$. If $1\le r\le k-1$, then we obtain $T$ by taking $P$ and adding the edges $u_jv_i$, where $0\le j\le r-1$ and $v_i\in S_{j+1}$. If $r=k$, then we have $|S_1|=\cdots=|S_k|=1$. We obtain $T$ by taking $P$, and adding the edge from $u_j$ to $S_{j+1}$ for $0\le j\le k-2$. Let $S_{k-1}=\{v_a\}$ and $S_k=\{v_b\}$. If $a\not\equiv b$ (mod $k)$, then we also add the edge $u_{k-2}v_b$. If $a\equiv b$ (mod $k)$, then we also add the path $v_bv_{b-1}u_{k-2}$ if $b\not\equiv 0$ (mod $k)$, or the path  $v_bv_{b+1}u_{k-2}$ if $b\equiv 0$ (mod $k)$. In every case, $T$ is a suitable rainbow tree. Hence, $rx_k(G)\le k^2-1$. 
\end{proof}
%
%
%
%
%
%
%
%
%
%
%

\section{Specific families of graphs}\label{specsec}

We will now compute $crx_k(G)$ for some specific graphs $G$. First, we consider the case when $G$ is a wheel $W_n=C_n+v$, i.e., the graph on $n+1\ge 4$ vertices, obtained by connecting the \emph{centre} vertex $v$ to all vertices of the cycle $C_n$. Even though the connectivity of $W_n$ is only $3$, we see that $crx_k(W_n)$ is well-defined for every $k$ with $1\le k\le n+1$, since $W_n$ is Hamiltonian.

\begin{thm}
\indent\\[-3ex]
\begin{enumerate}
\item[(a)] $crx_1(W_n)=3$ for $n\ge 3$.
\item[(b)] $crx_2(W_3)=3$, and $crx_2(W_n)=\lceil\frac{n}{2}\rceil+2$ for $n\ge 4$.
\item[(c)] 
\[
crx_3(W_n)=
\left\{
\begin{array}{l@{\quad}l}
n, & \textup{\emph{if $3\le n\le 7$,}}\\[0.5ex]
n-1, & \textup{\emph{if $8\le n\le 11$,}}\\[0.5ex]
n-2, & \textup{\emph{if $n\ge 12$.}}
\end{array}
\right.
\]
\item[(d)] For $k\ge 4$ and $n+1\ge k$,
\[
crx_k(W_n)=
\left\{
\begin{array}{l@{\quad}l}
n+1, & \textup{\emph{if $n<2k$,}}\\[0.5ex]
n, & \textup{\emph{if $n\ge 2k$.}}
\end{array}
\right.
\]
\end{enumerate}
\end{thm}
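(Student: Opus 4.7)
I will prove parts (a)--(d) separately, in each case via matching lower and upper bounds. Throughout, label the rim vertices $v_0,\ldots,v_{n-1}$ (indices mod $n$), write $v$ for the centre, $s_i=vv_i$ for spokes, and $r_i=v_iv_{i+1}$ for rim edges. The key structural fact I will invoke repeatedly is that every cycle in $W_n$ is either the outer cycle $C_n$ of length $n$, or passes through $v$ and consists of two spokes $s_a,s_b$ together with one of the two $v_a$--$v_b$ arcs of $C_n$, giving a cycle of length $(b-a)+2$ or $n-(b-a)+2$. Hence the minimum length of a cycle containing a specified vertex set is determined purely by the set's cyclic position on $C_n$.

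For (a), the lower bound is $g(W_n)=3$, matched by the colouring that gives all rim edges colour $3$ and alternates the spokes between colours $1$ and $2$ (when $n$ is odd the single same-coloured pair of consecutive spokes spoils only one triangle, and every vertex still lies in a rainbow triangle). For (b), the upper bound with $\lceil n/2\rceil+2$ colours is constructed by $2$-colouring the rim alternately (with a local fix for odd $n$) and pairing up opposite spokes to share $\lceil n/2\rceil$ further colours, so that every pair $\{v_i,v_j\}$ admits a short rainbow cycle through $v$. When $n$ is even the matching lower bound is immediate from the antipodal pair $\{v_0,v_{n/2}\}$. When $n$ is odd the cycle-length bound yields only $\lceil n/2\rceil+1$, so the final colour must be extracted by a finer argument: I plan to assume $\lceil n/2\rceil+1$ colours suffice and derive a contradiction from the (essentially unique) minimum-length rainbow cycles demanded by the various near-antipodal pairs $\{v_i,v_{i+(n-1)/2}\}$ and $\{v_i,v_{i+(n+1)/2}\}$ being forced to share edges with incompatible colour requirements; I expect this parity-sensitive step to be the main obstacle of the theorem.

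For (c), I split into the three ranges $3\le n\le 7$, $8\le n\le 11$, and $n\ge 12$. In each range I exhibit an explicit colouring that allows successively more repeated colour classes as $n$ grows (two for $n\ge 12$, one for $8\le n\le 11$, none for $n\le 7$), and verify via the structural observation above that every triple of vertices lies in a rainbow cycle. The matching lower bounds come from triples of well-spread rim vertices (or two rim vertices together with $v$) whose enclosing cycles are forced to use most of the rim. For (d), the dichotomy splits on whether $n\ge 2k$ or $n<2k$. When $n\ge 2k$, I colour the rim rainbow with colours $1,\ldots,n$ and assign spoke colours (possibly reusing rim colours) so that each $(k-1)$-subset $S'$ of the rim admits an arc containing it whose endpoint-spoke colours lie outside the arc's rim-colour set; the slack $n\ge 2k$ makes this choice possible, and $k$-subsets of rim vertices are covered directly by the rainbow $C_n$. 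When $n<2k$, any $k$-subset consisting of $v$ together with a suitably spread $(k-1)$-subset of the rim forces the enclosing cycle to use almost all rim edges, pushing the bound to $n+1$, which is matched by a rainbow Hamilton cycle with one colour reused on a single spoke. A secondary obstacle here is choosing the spoke colouring in the $n\ge 2k$ case uniformly so that all $(k-1)$-subsets are simultaneously handled.
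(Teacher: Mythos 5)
Your structural lemma (every cycle in $W_n$ is either $C_n$ or two spokes plus one arc) is correct, and your part (a) is exactly the paper's colouring. But your upper-bound construction for (b) is inverted and fails outright. If the rim is $2$-coloured alternately, then every rim path with at least $3$ edges repeats a colour, so a rainbow cycle through $v$ can contain at most $2$ rim edges and hence covers at most $3$ consecutive rim vertices; meanwhile any cycle containing the pair $\{v_0,v_{\lfloor n/2\rfloor}\}$ needs an arc of length at least $\lfloor n/2\rfloor$, and $C_n$ itself is not rainbow. So for every $n\ge 6$ an antipodal pair lies in no rainbow cycle under your colouring, no matter how you colour the spokes. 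The paper distributes the colours the opposite way: $\lceil\frac{n}{2}\rceil$ colours on the rim with the pattern $\phi(v_{i-1}v_i)\equiv i \pmod{\lceil n/2\rceil}$, and only \emph{two} colours on the spokes (one per half), so that the cycles $C(I)$ on $\lfloor\frac{n}{2}\rfloor+1$ consecutive rim vertices are rainbow and cover all pairs. (Your odd-$n$ lower-bound sketch, by contrast, is in the same spirit as the paper's argument, which forces each near-antipodal cycle $C(J_i)$ to be rainbow and derives a contradiction on the spoke colours.)

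The deeper problem is that your lower-bound plans for (c) with $n\ge 12$ and for (d) with $n<2k$ rest on per-set cycle-length bounds that provably cannot reach the stated values. For three rim vertices, the shortest containing cycle omits the interior of the largest gap and so has length at most $\lfloor\frac{2n}{3}\rfloor+2$, which is strictly less than $n-2$ for all $n\ge 13$; thus no ``well-spread triple'' forces $n-2$ colours. The paper instead observes that restricting any $3$-rainbow cycle colouring of $W_n$ to the rim yields a $3$-rainbow \emph{tree} colouring of $C_n$ (each rainbow cycle meets $C_n$ in a rainbow arc through the triple), and invokes the Chartrand--Okamoto--Zhang result $rx_3(C_n)=n-2$; some such global reduction is unavoidable. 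Likewise in (d): for any $(k-1)$-set $S'\subsetneq V'$ of rim vertices there is some $v_\ell\notin S'$, and the cycle $C(\bar{I}_\ell)$ through $v$ and all rim vertices except $v_\ell$ has length exactly $n$, so no choice of $S$ forces length $n+1$ and your argument tops out at $crx_k(W_n)\ge n$. Excluding an $n$-colouring when $n<2k$ is the hardest part of the theorem: the paper shows $C_n$ must then be rainbow, studies the maximal ``large'' rainbow cycles $C(I_1),\dots,C(I_p)$, proves each spoke lies in at most one of them so that $p\le\lfloor\frac{n}{2}\rfloor\le k-1$, and defeats both large and small cycles by placing $v$ together with representing vertices of the complements $\bar{I}_1,\dots,\bar{I}_p$ into $S$, with a separate delicate case when $p=k-1$. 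Finally, your upper bounds in (c) and in (d) for $n\ge 2k$ are only declared, and the paper's explicit constructions there (e.g.\ the mod-$4$ spoke pattern with a parity correction for odd $k$) are precisely where the remaining work lies.
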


\begin{proof}
Throughout, let $W_n=C_n+v$, where $v$ is the centre of $W_n$. Let $V'=\{v_0, v_1,\dots,v_{n-1}\}$ be the vertex set of $C_n$, where the vertices are in that order, with the indices taken modulo $n$. For a set $I$ of consecutive vertices of $V'$, where $2\le |I|\le n-1$, let $C(I)$ be the cycle of $W_n$ with vertex set $I\cup \{v\}$. 
Let $\bar{I}=V'\setminus I$ and $\bar{I}_i=V'\setminus\{v_i\}$.\\[1ex]
\indent (a) It suffices to prove that $crx_1(W_n)\le 3$. Colour the edge $vv_i$ with colour $1$ if $i$ is odd, and colour $2$ if $i$ is even. Colour all edges of $C_n$ with colour $3$. This is a rainbow cycle colouring for $W_n$, since every vertex lies in a rainbow triangle.\\[1ex]
\indent (b) The colouring of $W_3$ where $vv_i$ and $v_{i+1}v_{i+2}$ have colour $i$, for $i=1,2,3$, is a $2$-rainbow cycle colouring. Hence, $crx_2(W_3)=3$.

Let $n\ge 4$. We define a colouring $\phi$ on $W_n$ as follows. The cycle $C_n$ uses colours $1,2,\dots,\lceil\frac{n}{2}\rceil$, where $\phi(v_{i-1}v_i)\equiv i$ (mod $\lceil\frac{n}{2}\rceil$). Let $\phi(vv_i)=\lceil\frac{n}{2}\rceil+1$ for $0\le i\le \lceil\frac{n}{2}\rceil-1$, and $\phi(vv_i)=\lceil\frac{n}{2}\rceil+2$ for $\lceil\frac{n}{2}\rceil\le i\le n-1$. Then, the cycles $C(I)$ where $|I|=\lfloor\frac{n}{2}\rfloor+1$, except when $n$ is odd and $I=\{v_0,\dots,v_{\lfloor n/2\rfloor}\}$; and the cycle $C(\{v_0,\dots,v_{\lceil n/2 \rceil}\})$, are all rainbow coloured. Any two vertices of $W_n$ lie in one of these cycles. Hence, $\phi$ is a $2$-rainbow cycle colouring, and $crx_2(W_n)\le\lceil\frac{n}{2}\rceil+2$.


Now, we prove that $crx_2(W_n)\ge\lceil\frac{n}{2}\rceil+2$. The shortest cycle in $W_n$ containing $v_0$ and $v_{\lfloor n/2\rfloor}$ has length $\lfloor\frac{n}{2}\rfloor+2$. Hence $crx_2(W_n)\ge\lfloor\frac{n}{2}\rfloor+2$, and we are done for $n$ even. Let $n\ge 5$ be odd. Suppose that there exists a $2$-rainbow cycle colouring of $W_n$,  using exactly $\lfloor\frac{n}{2}\rfloor+2=\lceil\frac{n}{2}\rceil+1$ colours. For $0\le i\le n-1$, the only shortest cycle in $W_n$ containing $v_i$ and $v_{i+\lfloor n/2\rfloor}$ is $C(J_i)$, where $J_i=\{v_i,\dots,v_{i+\lfloor n/2\rfloor}\}$. Hence every cycle $C(J_i)$ must be rainbow coloured, using every one of the $\lfloor\frac{n}{2}\rfloor+2$ colours. By considering $C(J_i)$ and $C(J_{i-\lfloor n/2\rfloor})$, the only edge of $C_n$ that can possibly have the same colour as $vv_i$ is $v_{i+\lfloor n/2\rfloor}v_{i+\lfloor n/2\rfloor+1}$. Since $\lfloor\frac{n}{2}\rfloor+2<n$ for $n\ge 5$, assume that there exist two edges $vv_i$ and $vv_j$ with colour $1$. Then, no edge of $C_n$ has colour $1$. Since $vv_i,vv_{i+\lfloor n/2\rfloor}\in E(C(J_i))$ for all $0\le i\le n-1$, and $n$ is odd, there exists $0\le\ell\le n-1$ such that $vv_\ell$ and $vv_{\ell+\lfloor n/2\rfloor}$ have different colours, and different from colour $1$. But then, $C(I_\ell)$ contains an edge in $C_n$ using colour $1$, a contradiction. Hence, $crx_2(W_n)\ge\lceil\frac{n}{2}\rceil+2$.\\[1ex]
\indent (c) We have $crx_3(W_3)=3$ by considering the same colouring of $W_3$ as in (b). Let $n\ge 4$. We define a colouring $\phi$ on $W_n$ as follows. 
\begin{itemize}
\item For $4\le n\le 7$, colour the edges of $C_n$ with colours $1,\dots, n-2$ so that $\phi(v_{i-1}v_i)\equiv i$ (mod $n-2$). Let $\phi(vv_0)=\phi(vv_1)=n-1$ and $\phi(vv_2)=\phi(vv_{n-2})=\phi(vv_{n-1})=n$. Let $\mathcal I=\{\{v_0\},\{v_1\},\{v_2,\dots,v_{n-2}\},\{v_{n-1}\}\}$.
\item For $8\le n\le 11$, let $\phi(v_0v_1)=\phi(v_4v_5)=1$, $\phi(v_2v_3)=\phi(v_6v_7)=2$, $\phi(vv_{i-2})=\phi(v_{i-1}v_i)=\frac{i}{2}+2$ for $i=2,4,6,8$, $\phi(v_iv_{i-1})=i-2$ for $9\le i\le n$, and $\phi(vv_i)=n-1$ for $i=1,3,5,7$. Let $\mathcal I=\{\{v_1,v_2\},\{v_3,v_4\},\{v_5,v_6\},\{v_7,\dots, v_{n-1},v_0\}\}$.
\item For $n\ge 12$, let $\phi(v_0v_1)=\phi(v_6v_7)=1$, $\phi(v_3v_4)=\phi(v_9v_{10})=2$, $\phi(vv_{i-2})=\phi(v_{i-1}v_i)=\frac{2(i+1)}{3}+1$ for $i=2,5,8,11$, $\phi(vv_{i+1})=\phi(v_{i-1}v_i)=\frac{2i}{3}+2$ for $i=3,6,9$, $\phi(vv_1)=\phi(v_{11}v_{12})=10$, and $\phi(v_iv_{i-1})=i-2$ for $13\le i\le n$ (if $n\ge 13$). Let $\mathcal I=\{\{v_1,v_2,v_3\},$ $\{v_4,v_5,v_6\},\{v_7,v_8,v_9\},\{v_{10},\dots, v_{n-1},v_0\}\}$.
\end{itemize}
In each case, let $S$ be a set of three vertices of $W_n$. Then $S\cap I=\emptyset$ for some $I\in\mathcal I$. The cycle $C(\bar{I})$ is rainbow, and contains $S$. Hence, 
\[
crx_3(W_n)\le
\left\{
\begin{array}{l@{\quad}l}
n, & \textup{if $4\le n\le 7$,}\\[0.5ex]
n-1, & \textup{if $8\le n\le 11$,}\\[0.5ex]
n-2, & \textup{if $n\ge 12$.}
\end{array}
\right.
\]

Now, we prove the matching lower bounds for $crx_3(W_n)$. Consider a colouring of $W_n$, using fewer than $n$ colours for $4\le n\le 7$, and fewer than $n-1$ colours for $8\le n\le 11$. Then two edges of $C_n$, say $v_0v_1$ and $v_{i-1}v_i$ for some $2\le i\le \lfloor\frac{n}{2}\rfloor+1$, have the same colour. Any cycle containing the vertices $v_0,v_{\lfloor n/4\rfloor+1},v_{\lfloor n/2\rfloor+1}$ cannot be rainbow. Hence, $crx_3(W_n)\ge n$ for $4\le n\le 7$, and $crx_3(W_n)\ge n-1$ for $8\le n\le 11$. Now let $n\ge 12$. Again, we recall the result of Chartrand et al.~(\cite{COZ10}, Theorem 2.1) that $rx_3(C_n)=n-2$ for $n\ge 4$. If there exists a $3$-rainbow cycle colouring of $W_n$, using fewer than $n-2$ colours, then by omitting the edges $vv_i$, we obtain a colouring for $C_n$ which gives $rx_3(C_n)<n-2$, a contradiction. Hence, $crx_3(W_n)\ge n-2$.\\[1ex]
\indent (d) Since $W_n$ is Hamiltonian, we have $crx_k(W_n)\le n+1$. For $n\ge 2k$, define a colouring $\phi$ on $W_n$ as follows. For $1\le i\le n$, let $\phi(v_{i-1}v_i)=i$. For $0\le j\le 2k-1$ with $k$ even, or $0\le j\le 2k-7$ with $k$ odd, let 
\[
\phi(vv_j)=
\left\{
\begin{array}{l@{\quad}l}
j+2, & \textup{if $j\equiv 0,1$ (mod $4$),}\\[0.5ex]
j-1, & \textup{if $j\equiv 2,3$ (mod $4$).}
\end{array}
\right.
\]
In addition, for $k$ odd, let $\phi(vv_{2k-6})=\phi(vv_{2k-2})=2k-4$, $\phi(vv_{2k-5})=\phi(vv_{2k-1})=2k-2$, $\phi(vv_{2k-4})=2k-5$, and $\phi(vv_{2k-3})=2k-1$. We claim that $\phi$ is a $k$-rainbow cycle colouring. Let $S$ be a set of $k$ vertices of $W_n$. If $v\not\in S$, then $C_n$ is a rainbow cycle containing $S$. Now, let $v\in S$. For $k$ even, $S$ does not contain some vertex of the set $\{v_j: 0\le j\le 2k-1$ and $j\equiv 1,2$ (mod $4)\}$, say $v_\ell$. The cycle $C(\bar{I}_\ell)$ contains $S$, and is rainbow since we can obtain $C(\bar{I}_\ell)$ from $C_n$ by deleting and adding two edges with colours $\ell$ and $\ell+1$. For $k$ odd, $S$ does not contain some vertex of the set $\{v_j: 0\le j\le 2k-7$ and $j\equiv 1,2$ (mod $4)\}\cup\{v_{2k-5},v_{2k-2}\}$, or $S\cap\{v_{2k-4},v_{2k-3}\}=\emptyset$. If the former happens then, again letting $v_\ell$ be a vertex omitted by $S$, the cycle $C(\bar{I}_\ell)$ contains $S$ and is rainbow. If the latter, then the cycle $C(\overline{\{v_{2k-4},v_{2k-3}\}})$ contains $S$ and is rainbow, since to obtain $C(\overline{\{v_{2k-4},v_{2k-3}\}})$ from $C_n$, we delete three edges of colours $2k-4,2k-3,2k-2$, and add two edges of colours $2k-4$ and $2k-2$. Hence, $crx_k(W_n)\le n$.

Now, we prove the matching lower bounds for $crx_k(W_n)$. First, we show that $crx_k(W_n)\ge n$ for all $n$ with $n+1\ge k$. Suppose that there exists a $k$-rainbow cycle colouring, using fewer than $n$ colours. Then two edges $e_1$ and $e_2$ in $C_n$ have the same colour. Let $S$ be a set of $k$ vertices which contains the three (if $e_1$ and $e_2$ are incident) or four (otherwise) end-vertices of $e_1$ and $e_2$. Let $C$ be a rainbow cycle containing $S$. Then $C$ must use exactly one of $e_1$ and $e_2$, say $e_2$. Thus, $C$ must be the cycle obtained from $C_n$ by deleting $e_1$, and adding the two edges connecting $v$ and the end-vertices of $e_1$. But then, $C$ has length $n+1$, and so cannot be rainbow, a contradiction.

We are done for $n\ge 2k$. Now, let $n<2k$. We show that $crx_k(W_n)\ge n+1$. This clearly holds for $k=n+1$. Thus, let $k\le n$, so that $n\ge 4$. Suppose that there exists a $k$-rainbow cycle colouring of $W_n$, using  fewer than $n+1$ colours. By the previous argument, such a colouring uses exactly $n$ colours, and moreover, the $C_n$ must be rainbow coloured. We will choose a set $S$ of $k$ vertices of $W_n$ such that $v\in S$, so that any rainbow cycle containing $S$ has the form $C(I)$ for some $I\subset V'$. We say that a cycle $C(I)$ is \emph{large} if $|I|\ge \lceil\frac{n}{2}\rceil+1$, and \emph{small} otherwise. A rainbow coloured large cycle $C(I)$ is \emph{maximal} if there is no rainbow coloured cycle $C(I')$ such that $I'\varsupsetneq I$. By considering $S$ such that $v,v_0,v_{\lfloor n/2\rfloor},v_{\lfloor n/2\rfloor+1}\in S$, we see that there must exist a rainbow coloured large cycle, and hence a maximal cycle. Let $C(I_1),\dots,C(I_p)$ be all the maximal cycles, for some $p\ge 1$. Now, for two maximal cycles $C(I_i)$ and $C(I_j)$, the sets $\bar{I}_i$ and $\bar{I}_j$ are not nested, i.e., neither one contains the other. For $1\le i\le p$, let $v_{h_i}\in \bar{I}_i$ be the ``first vertex'' of $\bar{I}_i$, i.e., if $\bar{I}_i=\{v_q,v_{q+1},\dots\}$ for some $0\le q\le n-1$, then $h_i=q$. We may assume that $0=h_1<h_2<\cdots<h_p\le n-1$. Next, since $C_n$ is rainbow coloured and there are exactly $n$ colours, every edge $vv_i$ can belong to at most one of $C(I_1),\dots,C(I_p)$. Indeed, for the edge $vv_0$, the only possible maximal cycle containing $vv_0$, if it exists, is either $C(\{v_0,\dots, v_s\})$ for some $\lceil\frac{n}{2}\rceil\le s\le n-2$, or $C(\{v_t,\dots,v_{n-1},v_0\})$ for some $2\le t\le\lfloor\frac{n}{2}\rfloor$. A similar argument holds for any other edge $vv_i$. It follows that $p\le \lfloor\frac{n}{2}\rfloor\le k-1$. 
%

We say that the distinct vertices $u_1,\dots,u_p$ are \emph{representing vertices} for $\bar{I}_1,\dots,\bar{I}_p$ if $u_i\in \bar{I}_i$ for $1\le i\le p$. Since $v_{h_1},\dots,v_{h_p}$ are representing vertices for $\bar{I}_1,\dots,\bar{I}_p$, there exists at least one such set of vertices. Let $S$ be a set of $k$ vertices with $v,u_1,\dots,u_p\in S$. If there is a large rainbow cycle containing $S$, then $S\subset V(C(I_i))$ for some $1\le i\le p$. But this implies $u_i\in V(C(I_i))$, a contradiction.
Hence, it suffices to show that we can choose a set $S$ of $k$ vertices, containing $v$ and representing vertices for $\bar{I}_1,\dots,\bar{I}_p$, so that no small rainbow cycle can contain $S$.\\[1ex]
\emph{Case 1.} $1\le p\le k-2$.\\[1ex]
\indent If $p=1$, then recall that $v_0\in \bar{I}_1$. Let $S$ be such that $v,v_0,v_{\lfloor n/2\rfloor},$ $v_{\lfloor n/2\rfloor+1}\in S$. If $2\le p\le k-2$, choose a set of representing vertices $u_1,\dots,u_p$ for $\bar{I}_1,\dots,\bar{I}_p$. We may assume that $u_1=v_0$ and $u_2=v_i$ where $1\le i\le\lfloor\frac{n}{2}\rfloor$. We further choose $v_{\lfloor n/2\rfloor+1}$ (if it is not already among the $u_j$), so that $v,u_1,u_2,\dots, u_p,v_{\lfloor n/2\rfloor+1}\in S$. In every case, there is no small cycle containing $S$.\\[1ex]
\emph{Case 2.} $p=k-1$.\\[1ex]
\indent We have $p=\lfloor\frac{n}{2}\rfloor$, and $n=2k-2$ or $n=2k-1$. We claim that, we can choose the representing vertices $u_1,\dots,u_p$ for $\bar{I}_1,\dots,\bar{I}_p$, so that they do not form a set of consecutive vertices of $V'$. For $1\le i\le p$, let $u_i=v_{h_i}$. If $u_1,\dots,u_p$ are consecutive vertices of $V'$, then we have $u_i=v_{i-1}$ for all $1\le i\le p=k-1$. Note that $u_p=v_{k-2}$, and we must have $v_{k-1}\in \bar{I}_p$. Otherwise, we have $\bar{I}_p=\{v_{k-2}\}$, and since $\bar{I}_1,\dots,\bar{I}_p$ are pairwise non-nested, it is easy to see that we have $\bar{I}_i=\{v_{i-1}\}$ for all $1\le i\le p$. But this is a contradiction, since $C(I_1)$ and $C(I_3)$ would both contain the edge $vv_1$. Thus, we may replace $u_p=v_{k-2}$ with $u_p=v_{k-1}$. We then have $u_1,\dots,u_p$ is a set of representing vertices for $\bar{I}_1,\dots,\bar{I}_p$ which is not a set of consecutive vertices of $V'$, since $k-1\le n-2$, so that $u_p=v_{k-1}$ is not adjacent to $v_0$ in $C_n$. 
%
%

Now let $S=\{v,u_1,\dots,u_p\}$. If $n=2k-2$, then $S$ is not contained in a small cycle, since this would require the $u_j$ to be consecutive vertices of $V'$. Let $n=2k-1$ and suppose that there is a small cycle containing $S$. We may assume that $u_1=v_0$, $u_p=v_{k-1}$, and $u_2,\dots,u_{p-1}\in\{v_1,\dots,v_{k-2}\}$, so that the only small cycle containing $S$ is $C=C(\{v_0,\dots,v_{k-1}\})$. The edges $vv_0$ and $vv_{k-1}$ are in $C$, and we may assume that $vv_0$ is contained in a maximal cycle $C^*$, so that $C^*=C(\{v_\ell,\dots,v_{n-1},v_0\})$ for some $2\le\ell\le k-1$ (since if $C^*$ contains $S$, then again we obtain a contradiction). It follows that the colour of $vv_0$ is the same as one of $v_0v_1,\dots, v_{\ell-1}v_\ell$, and thus $C$ is not a rainbow cycle.\\[1ex]
\indent We conclude that we can always find a set $S$ of $k$ vertices so that no rainbow cycle contains $S$, which is the required contradiction. Hence $crx_k(W_n)\ge n+1$, as required.
\end{proof}

Our next task is to determine $crx_k(G)$ when $G=K_n$ is a complete graph. We have the following result.

\begin{thm}\label{crxkKnthm}
\indent\\[-3ex]
\begin{enumerate}
\item[(a)] $crx_1(K_n)=crx_2(K_n)=3$ for $n\ge 3$.
\item[(b)] For $k\ge 3$, there exists $N=N(k)$ such that for $n\ge N$, we have $crx_k(K_n)=2k-1$.
\end{enumerate}
\end{thm}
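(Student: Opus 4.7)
The plan has two largely independent parts.

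Part (a) is the easier direction. The lower bound is immediate from $g(K_n)=3$, which gives $3=g(K_n)\le crx_1(K_n)\le crx_2(K_n)$. For the upper bound, I would exhibit a $3$-edge-colouring of $K_n$ in which every pair of vertices lies in a rainbow triangle. The cases $n=3$ and $n=4$ can be handled explicitly (for $n=4$ the proper $3$-edge-colouring has all four triangles rainbow). For $n\ge 5$, I would build such a colouring directly, e.g.\ by partitioning $V(K_n)$ into three nonempty classes and colouring within-class and cross-class edges according to a scheme that guarantees a rainbow triangle through every pair of vertices.

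For the lower bound in (b), my plan is a Ramsey argument. Take $N=N(k)$ to be at least the multicolour Ramsey number $R_{2k-2}(k)$ for cliques of order $k$. Any edge-colouring of $K_n$ (with $n\ge N$) using at most $2k-2$ colours contains a monochromatic $K_k$ of some colour $\alpha$, whose vertex set I take as $S$. Suppose for contradiction that $C$ is a rainbow cycle through $S$ of length $\ell$. The vertices of $S$ partition $C$ into $k$ arcs whose lengths sum to $\ell$. Since at most $2k-2$ colours are available, $\ell\le 2k-2$, and so at least $2k-\ell\ge 2$ of these arcs have length exactly $1$. Each such arc is a single edge lying inside $S$, hence of colour $\alpha$, contradicting the rainbowness of $C$. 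Thus $crx_k(K_n)\ge 2k-1$.

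For the upper bound in (b), I would construct an explicit $(2k-1)$-colouring in which every $k$-subset lies in a rainbow cycle. The natural template is a blow-up: partition $V(K_n)$ into $2k-1$ classes $V_1,\ldots,V_{2k-1}$, each of size above a threshold depending on $k$; fix a proper $(2k-1)$-edge-colouring $\pi$ of $K_{2k-1}$; and colour any cross-class edge between $V_i$ and $V_j$ by $\pi(ij)$, with edges inside a class assigned colours under an auxiliary rule. Given any $k$-subset $S$, one would assemble a rainbow cycle through $S$ by piecing together cross-class edges following a rainbow cycle of the quotient $K_{2k-1}$ when the classes of the vertices of $S$ are all distinct, and by detouring through suitable auxiliary vertices taken from classes disjoint from those meeting $S$ when several vertices of $S$ share a class.

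The hard part will be executing and verifying the upper bound construction. One must handle every possible multiplicity profile of the classes containing vertices of $S$, ranging from ``all distinct'' to ``nearly all in one class''. In the harder profiles the rainbow cycle has to detour through outside vertices while avoiding colour clashes between the within-class colours of $S$-edges and the cross-class colours of the detours, which may force a refinement of the blow-up rule or of the within-class colouring. The threshold $N(k)$ then arises as the maximum of the Ramsey bound used in the lower bound and the minimum class-size needed to carry out these detours.
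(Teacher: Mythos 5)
Your lower bounds are fine: part (a)'s girth bound and part (b)'s Ramsey argument (with $N\ge R_{2k-2}(k)$ and the arc count forcing two edges inside the monochromatic $K_k$) are exactly the paper's, and your write-up of the latter is if anything slightly more explicit. The gaps are in both upper bounds. For (a), note that with only $3$ colours every rainbow cycle is a triangle, so a $3$-colouring is a $2$-rainbow cycle colouring if and only if every pair of vertices lies in a rainbow triangle. Your suggested three-class scheme, read in the natural way (colour of an edge determined by the classes of its endpoints), provably fails: if $x,y$ lie in the same class then $\phi(xz)=\phi(yz)$ for every third vertex $z$, so no triangle through $x,y$ is rainbow. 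Since you give no other construction, the upper bound in (a) is unproven. The paper instead argues by induction on $n$: given a valid $3$-colouring of $K_{n-1}$, a new vertex $u$ is attached by pairing the old vertices into a (near-)perfect matching $M$ and, for each $vw\in M$, giving $uv$ and $uw$ the two colours different from $\phi'(vw)$ (with a rainbow triangle $x_1x_2x_3$ absorbing the leftover vertex when $n-1$ is odd); this forces every pair involving $u$ into a rainbow triangle.

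For (b), the upper bound is the crux, and your blow-up plan faces a concrete obstruction beyond mere bookkeeping. In any blow-up, the two edges from two same-class vertices to a common vertex of another class receive the same colour $\pi(ij)$, so a length-$2$ detour between same-class vertices of $S$ cannot leave the class. Now make the classes large: by Ramsey, the within-class colouring (which has only the same $2k-1$ colours available) contains a monochromatic $K_k$; take $S$ to be its vertex set. A rainbow cycle through $S$ uses at most one edge inside $S$, hence has length at least $2k-1$, with equality forcing $k-1$ length-$2$ detours whose intermediate vertices must all lie \emph{inside} the class by the observation above. You are thus required to solve the original problem recursively on a huge complete graph with the same fixed palette of $2k-1$ colours --- an infinite regress, which is presumably why no explicit construction is known. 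The paper sidesteps this with a first-moment argument: colour $K_n$ uniformly at random with $2k-1$ colours; for each $k$-set $S$, carve out $t=\lfloor\frac{n-k}{k-1}\rfloor$ disjoint $(k-1)$-sets $U_i$ and observe that for each $i$ at least one colouring of $E(S,U_i)$ yields a rainbow $(2k-1)$-cycle $v_1u_1v_2u_2\cdots v_{k-1}u_{k-1}v_kv_1$ through $S$ (only one edge of which lies inside $S$). The probability that $S$ fails is then at most $\gamma_k\delta_k^n$ for some $\delta_k<1$, and a union bound over the $\binom{n}{k}$ sets gives existence for large $n$. Without this (or a genuinely correct construction), your proposal does not establish $crx_k(K_n)\le 2k-1$.
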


\begin{proof}
(a) It suffices to show that $crx_2(K_n)\le 3$ for every $n\ge 3$. We use induction on $n$ to show that there is a $2$-rainbow cycle colouring of $K_n$, using three colours. For $n=3$, we take the rainbow coloured $K_3$. Let $n\ge 4$, and $u$ be a vertex of $G=K_n$. By induction, there exists a $2$-rainbow cycle colouring $\phi'$ for $G-u\cong K_{n-1}$, using three colours. Define a colouring $\phi$ for $G$, using three colours, as follows. If $n-1$ is odd, take $x_1,x_2,x_3\in V(G-u)$ such that $\phi'(x_1x_2)$, $\phi'(x_2x_3)$ and $\phi'(x_3x_1)$ are distinct, and let $\phi(ux_i)=\phi'(x_{i+1}x_{i+2})$ for $i=1,2,3$ (with indices taken modulo $3$). Then (whether $n-1$ is odd or even), take a perfect matching $M$ on the remaining vertices of $G-u$, and for $vw\in M$, let $\phi(uv)$ and $\phi(uw)$ be the two colours different from $\phi'(vw)$. Now, any two vertices $x,y\in V(G)$ lie in a rainbow triangle under $\phi$, where we use induction if $x,y\in V(G-u)$. Hence, $\phi$ is a $2$-rainbow cycle colouring of $G$, and we are done by induction.\\[1ex]
\indent (b) Recall that for integers $r,\ell\ge 1$, the \emph{Ramsey number} $R_r(\ell)$ is the minimum integer $t$ such that, whenever we have an edge-colouring of $K_t$ with at most $r$ colours, there exists a monochromatic copy of $K_\ell$. It is well-known that all Ramsey numbers $R_r(\ell)$ exist. If $n\ge R_{2k-2}(k)$ and we have a $(2k-2)$-colouring of $K_n$, then there exists a monochromatic copy of $K_k$, say with vertex set $S$. Any cycle of length at most $2k-2$ containing $S$ must contain at least two edges with end-vertices in $S$, and so cannot be rainbow. Hence, $crx_k(K_n)\ge 2k-1$.

Now, we prove that $crx_k(K_n)\le 2k-1$ for $n$ sufficiently large. Consider a random $(2k-1)$-colouring of $K_n$. For $S\subset V(K_n)$ with $|S|=k$, let $E_S$ be the event that there is no rainbow cycle containing $S$. We estimate the probability $\mathbb P(E_S)$. Let $U_1,\dots, U_t\subset V(K_n)\setminus S$ be disjoint sets with $t=\lfloor\frac{n-k}{k-1}\rfloor$, and $|U_i|=k-1$ for $1\le i\le t$. Let $H$ be the complete subgraph of $K_n$ induced by $S$. Let $\Phi$ be the set of all $(2k-1)$-colourings of $H$ with no rainbow cycle $C_k$ containing $S$. Let $H$ be coloured by $\phi\in\Phi$ and $1\le i\le t$. There is at least one way to colour the $k(k-1)$ edges of $E(S,U_i)$ so that we have a rainbow cycle containing $S$, with vertices in $S\cup U_i$. Indeed, if $S=\{v_1,\dots , v_k\}$ and $U_i=\{u_1,\dots , u_{k-1}\}$, then we can assign $2k-1$ distinct colours to the cycle $C=v_1u_1\cdots v_{k-1}u_{k-1}v_kv_1$, since $v_1v_k$ is the only edge of $H$ in $C$. Hence, there are some $\alpha_\phi<(2k-1)^{k(k-1)}$ colourings of $E(S,U_i)$ so that there is no rainbow cycle containing $S$, with vertices in $S\cup B_i$. Let $\alpha=\max\{\alpha_\phi:\phi\in\Phi\}<(2k-1)^{k(k-1)}$. Since there are at most $k(k-2)+{n-k\choose 2}$ edges not in $E(H)\cup\bigcup_{i=1}^t E(S,U_i)$, we have
\[
\mathbb P(E_S)\le \frac{|\Phi|\alpha^t(2k-1)^{k(k-2)+{n-k\choose 2}}}{(2k-1)^{n\choose 2}}\le \frac{\gamma_k\alpha^{n/(k-1)}}{(2k-1)^{kn}}=\gamma_k^{\phantom{n}\!}\delta_k^n=o(n^{-k}),
\]
where $\gamma_k>0$ and $0<\delta_k<1$ are constants depending only on $k$.\\
\indent By the union bound,
\[
\mathbb P\bigg(\bigcup_{S\subset V(K_n),\,|S|=k}E_S\bigg)\le\sum_{S\subset V(K_n),\,|S|=k}\mathbb P(E_S)\le {n\choose k}o(n^{-k})<1,
\]
for $n$ sufficiently large. Hence, there exists a $k$-rainbow cycle colouring of $K_n$, using $2k-1$ colours, and $crx_k(K_n)\le 2k-1$.
\end{proof}

\noindent{\bf Remark.} We may similarly prove the result that $rx_k(K_n)=k$ for $n$ sufficiently large. This result is a special case of a conjecture of Chartrand et al.~(\cite{COZ10}, Conjecture 3.12), and the conjecture itself has since been solved by Cai et al.~\cite{CLS13}.\\[2ex]
%
%
%
%
\indent Next we consider $crx_k(G)$, when $G=K_{m,n}$ is a complete bipartite graph with $2\le m\le n$. A sufficient condition for the existence of $crx_k(K_{m,n})$ is $m\ge k$. We have the following result.

\begin{thm}\label{crxkKmnthm}
\indent
\begin{enumerate}
\item[(a)] $crx_1(K_{m,n})=4$ for $2\le m\le n$.
\item[(b)] 
\begin{enumerate}
\item[(i)] $crx_2(K_{2,n})=2n$ for $n\ge 2$.
\item[(ii)] $crx_2(K_{3,n})=r$ for $n\ge 36$, where $r$ is the integer such that ${r-1\choose 3}< n \le{r\choose 3}$. Thus, $crx_2(K_{3,n})=\sqrt[3]{6n}+C_n$, where $1<C_n<3$.
\item[(iii)] $crx_2(K_{m,n})=8$ for $m\ge 4$ and $n>7^m$.
\end{enumerate}
\item[(c)] Let $k\ge 2$.
\begin{enumerate}
\item[(i)] $\lceil\frac{n}{k-1}\rceil\le crx_k(K_{k,n})\le kn$ for $n\ge k$.
\item[(ii)] $crx_k(K_{m,n})\ge 4k$ for $m\ge 2k$ and $n>(k-1)(4k-1)^m$.
\item[(iii)] $crx_k(K_{m,n})\le 6k$ for $3k\le m\le n$.
\end{enumerate}
\end{enumerate}
\end{thm}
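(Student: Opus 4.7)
The plan is to treat each part separately, using pigeonhole on edge-colour patterns at $B$-vertices for the lower bounds, and explicit combinatorial colourings for the upper bounds. For (a), the lower bound $crx_1(K_{m,n})\ge 4$ is immediate from $g(K_{m,n})=4$, and for the upper bound I colour $a_ib_j$ by the pair $(i\bmod 2,j\bmod 2)$, giving four colours and a rainbow $C_4$ through every vertex (using $m,n\ge 2$ to find a second vertex of each parity on each side). For (b)(i), $K_{2,n}$ is minimally $2$-connected since deleting any edge creates a vertex of degree $1$, so Theorem \ref{crxk=m}(b) directly yields $crx_2(K_{2,n})=e(K_{2,n})=2n$.

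For (b)(ii), write $A=\{a_1,a_2,a_3\}$. The upper bound assigns each $b_j$ a distinct triple $T_j\in\binom{[r]}{3}$ together with a bijection $\sigma_j:[3]\to T_j$, and colours $a_ib_j$ by $\sigma_j(i)$. A case analysis on $|T_j\cap T_k|\in\{0,1,2\}$ produces a rainbow $C_4$ through $b_j,b_k$ in the first two cases; in the overlap-$2$ case, the bijections are chosen coherently (for example by cyclic shifts) so that either a rainbow $C_4$ or a rainbow $C_6$ through a third vertex $b_\ell$ exists. The lower bound is pigeonhole: any $b_j$ appearing in a rainbow $C_4$ must have three distinctly coloured edges, so with $r-1$ colours there are at most $\binom{r-1}{3}$ possible colour triples at a $b_j$; if $n>\binom{r-1}{3}$ two $b_j,b_k$ share a triple $T$, but every cycle in $K_{3,n}$ through $b_j,b_k$ is a $C_4$ or $C_6$ whose edges at $\{b_j,b_k\}$ contribute at most three distinct colours (all in $T$), so no such cycle is rainbow. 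The asymptotic $\sqrt[3]{6n}+C_n$ follows from $\binom{r}{3}\sim r^3/6$. For (b)(iii), the lower bound extends this: with $7$ colours the edges at each $b_j$ form a sequence in $[7]^m$, and $n>7^m$ yields two $b_j,b_k$ with $\phi(a_ib_j)=\phi(a_ib_k)$ for every $i$; any rainbow cycle through $b_j,b_k$ therefore cannot include an $a$-vertex adjacent to both (the same colour would repeat), so must use four distinct $a$-vertices and have length $\ge 8$. The upper bound picks four ``spine'' vertices $a_1,\ldots,a_4\in A$, gives each a disjoint pair of colours from $[8]$, and colours $a_sb_j$ alternately between its two colours according to a suitable parity of $j$, so that any two vertices of $K_{m,n}$ lie on a rainbow cycle of length at most $8$.

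For (c)(i), the upper bound uses a distinct colour on each edge. For the lower bound, fix $a\in A$; any rainbow cycle through a set $S\subset B$ with $|S|=k$ must visit all of $A$ (by bipartite alternation, since $|A|=k$) and in particular use two edges at $a$ going to vertices of $S$. If fewer than $\lceil n/(k-1)\rceil$ colours are used, pigeonhole at $a$ gives a colour class with $\ge k$ edges $ab_{j_1},\ldots,ab_{j_k}$; choosing $S=\{b_{j_1},\ldots,b_{j_k}\}$ then forces a repeated colour at $a$ in any such cycle. Part (c)(ii) generalises (b)(iii): if $4k-1$ colours suffice, pigeonhole on $[4k-1]^m$ using $n>(k-1)(4k-1)^m$ yields $k$ vertices $b_{j_1},\ldots,b_{j_k}$ sharing a colour pattern, and any rainbow cycle through this $S$ must use a distinct $a$ for each $b_{j_s}$'s pair of cycle-incident edges, needing $2k$ distinct $a$-vertices (available since $m\ge 2k$) and cycle length $\ge 4k$. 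For (c)(iii), I partition $[6k]$ into six blocks of $k$ colours and partition $3k$ chosen vertices of $A$ into three groups of $k$; each group's edges to $B$ are coloured using two dedicated blocks in a shifted pattern, so that for any $k$-set $S$ a rainbow cycle of bounded length can be routed by drawing $2k$ distinct colours each from two of the groups.

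The main obstacle I expect is the upper-bound construction in (b)(ii) for triples $T_j,T_k$ with $|T_j\cap T_k|=2$, which forces either a carefully coordinated choice of the bijections $\sigma_j$ or a fallback to rainbow $C_6$'s through auxiliary vertices; a similar subtlety recurs in the (b)(iii) and (c)(iii) constructions, where balancing colour availability at each $a$-vertex so that rainbow cycles of the required length can always be routed is the technical heart. The lower bounds, by contrast, are handled uniformly by the same ``pigeonhole on colour patterns plus bipartite cycle-length'' method: an $a$-vertex adjacent in a cycle to two $b$-vertices of identical pattern forces a repeated colour, driving up the minimum cycle length and hence the number of required colours.
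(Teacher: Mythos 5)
Your lower bounds are correct and coincide with the paper's: the girth bound in (a); minimal $2$-connectedness plus Theorem \ref{crxk=m}(b) in (b)(i); the pigeonhole on $3$-sets of colours at degree-$3$ vertices of $V$ in (b)(ii) (modulo a small slip: a rainbow $C_4$ uses only \emph{two} edges at $b_j$, so ``must have three distinctly coloured edges'' is false as stated --- the correct statement, which you then in fact use, is that the at most three colours at each $b_j$ fit into some $3$-set in $[r-1]^{(3)}$, and two $V$-vertices assigned the same $3$-set cannot share a rainbow cycle); the pigeonhole at a single $A$-vertex in (c)(i); and the identical-colour-vector argument forcing cycle length at least $4k$ in (c)(ii), of which your (b)(iii) lower bound is the $k=2$ case, exactly as in the paper. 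Your upper bound in (a) via the parity colouring $(i\bmod 2,\,j\bmod 2)$ is a correct, harmless variant of the paper's colouring.

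The genuine gaps are all in the three nontrivial upper-bound constructions, which are the technical heart of the theorem, and you only partly acknowledge them. In (b)(ii), ``bijections chosen coherently (for example by cyclic shifts)'' does not settle the $|T_j\cap T_k|=2$ case: the rainbow-$C_6$ fallback requires a vertex $w$ whose two relevant edge colours avoid the four colours $\{a,b,c,d\}$, and for an arbitrary assignment of $n$ distinct triples such a $w$ need not exist. The paper manufactures it by labelling $V\setminus V'$ with an \emph{initial segment of the colexicographic order} on $[r]^{(3)}$ and colouring in sorted order, so that whenever a triple meeting $\{r-1,r\}$ occurs, \emph{all} of $[r-2]^{(3)}$ is present and $r-2\ge 6$ (this is exactly where $n\ge 36$ enters). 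You also never treat pairs inside $U$, nor mixed pairs $x\in U$, $y\in V$; the mixed case is handled in the paper by a non-obvious device --- choose $w$ so that $y$ and $w$ are \emph{not} in a rainbow $C_4$, which forces them into a rainbow $C_6$, and any $C_6$ in $K_{3,n}$ contains all three $U$-vertices, hence $x$. In (b)(iii) your spine scheme breaks: you colour the edges at only four vertices of $A$, each confined to a disjoint pair of colours, and say nothing about the other $m-4$ vertices of $U$, through which rainbow cycles must also pass; worse, any two $U$-vertices whose incident edges use the same two colours can never lie on a common rainbow cycle (four edges, two colours), and $[8]$ contains only four disjoint pairs, so the pair scheme cannot be extended to $m\ge 5$. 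The paper avoids this by colouring from the $V$-side: three special $V$-vertices receive the ``spread'' vectors $(4,3,2,1,1,\dots,1)$, $(5,6,7,8,8,\dots,8)$, $(8,7,6,5,5,\dots,5)$, every other $V$-vertex receives $(1,2,3,4,4,\dots,4)$, and all five types of vertex pairs are checked explicitly. Finally, in (c)(iii) your ``six blocks, three groups, shifted pattern'' is a slogan rather than a construction: what must be proved is that \emph{every} $k$-set $S$, possibly meeting both sides and the special vertices, lies on a rainbow cycle, and the paper's argument --- an explicit rainbow near-matching $F$ between $U'$ and $V'$ built from cyclic shifts, fresh colours on $[U\setminus U',V']$ and $[U',V\setminus V']$, routing through the auxiliary sets $U''$ and $V''$, and a symmetry/shift reduction of the cases --- is precisely the content your sketch omits.
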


\begin{proof}
Throughout this proof, let $U$ and $V$ be the classes of $K_{m,n}$, with $U=\{u_1,\dots,u_m\}$ and $V=\{v_1,\dots,v_n\}$. For integers $1\le\ell\le N$, let $[N]=\{1,\dots,N\}$, and let $[N]^{(\ell)}$ denote the family of all subsets of $[N]$ with size $\ell$.\\[1ex]
\indent (a) Clearly $crx_1(K_{m,n})\ge g(K_{m,n})=4$. Now, define a $4$-colouring $\phi$ on $K_{m,n}$ where $\phi(u_1v_1)=1$; $\phi(u_1w)=2$ for $w\in V\setminus\{v_1\}$; $\phi(v_1w)=3$ for $w\in U\setminus\{u_1\}$; and $\phi(xy)=4$ for $x\in U\setminus\{u_1\}$, $y\in V\setminus\{v_1\}$. Then, every vertex of $K_{m,n}$ is in a rainbow cycle of length $4$ which uses $u_1v_1$. Hence, $crx_1(K_{m,n})\le 4$.\\[1ex]
\indent (b)(i) Since $K_{2,n}$ is minimally $2$-connected for $n\ge 2$, we have $crx_2(K_{2,n})=e(K_{2,n})=2n$ by Theorem \ref{crxk=m}(b). It is also not difficult to prove this result directly.\\[1ex]
\indent (b)(ii) Note that $n\ge 36$ implies $r\ge 8$. First, suppose that we have a colouring of $K_{3,n}$, using at most $r-1$ colours. Let the colours be $1,2,\dots, r-1$. For $x\in V$, let $A_x\in [r-1]^{(3)}$ be a set which contains the colours incident with $x$. Since $n>{r-1\choose 3}$, there exist $x,y\in V$ with $A_x=A_y$, and we cannot have $x$ and $y$ in a rainbow cycle. Thus, $crx_2(K_{3,n})\ge r$.


Now, we define an $r$-colouring $\phi$ of $K_{3,n}$ as follows. Let $U=\{u_1,u_2,u_3\}$ and $V'=\{v_1,v_2,v_3\}\subset V$. Let $\phi(u_1v_3)=\phi(u_2v_1)=\phi(u_3v_2)=r-1$, $\phi(u_1v_2)=\phi(u_2v_3)=\phi(u_3v_1)=r$, and $\phi(u_iv_i)=r-5+i$ for $i=1,2,3$. For $1\le \ell\le r$, the \emph{colexicographic ordering} on the elements of $[r]^{(\ell)}$ is a linear ordering where an $\ell$-set $A=\{a_1,\dots,a_\ell\}$ with $a_1<\cdots<a_\ell$ precedes an $\ell$-set $B=\{b_1,\dots,b_\ell\}$ with $b_1<\cdots<b_\ell$ if $a_m<b_m$, where $m=\max\{p:a_p\neq b_p\}$. We label the $n-3$ vertices of $V\setminus V'$ with the $n-3$ initial members of $[r]^{(3)}$ in the colexicographic ordering. If a vertex $v\in V\setminus V'$ has been labelled with $\{a_1,a_2,a_3\}$ where $1\le a_1<a_2<a_3\le r$, then we let $\phi(u_iv)=a_i$ for $i=1,2,3$. For $v\in V$, let $A_v$ be the $3$-set consisting of the colours at $v$. Note that the vertices of $V'$ have been labelled with the last three members of $[r]^{(3)}$ in the colexicographic ordering, and since $n\le{r\choose 3}$, the $3$-sets $A_v$ are distinct over all $v\in V$.

We claim that $\phi$ is a $2$-rainbow cycle colouring. Firstly, it is easy to verify that any two vertices of $U\cup V'$ lie in a rainbow $C_4$. Next, let $x\in V$ and $y\in V\setminus V'$. If $|A_x\cup A_y|\in\{5,6\}$, then clearly $x$ and $y$ lie in a rainbow $C_4$. Thus, let $A_x=\{a,b,c\}$ and $A_y=\{a,b,d\}$, where $a,b,c,d\in[r]$ are distinct. If $x$ and $y$ are not in a rainbow $C_4$, then there exists a rainbow path of length four in the form $u_\alpha xu_\gamma yu_\beta$, where $\{\alpha, \beta, \gamma\}=\{1,2,3\}$, and using the colours $a,b,c,d$. This can be seen easily by considering the two edges of colours $c$ and $d$, and whether or not they are incident. It then suffices to find a vertex $w\in V\setminus\{x,y\}$ such that $\phi(wu_\alpha), \phi(wu_\beta)\not\in\{a,b,c,d\}$, so that $x$ and $y$ are in a rainbow $C_6$. If $a,b,c,d\le r-2$, then $x,y\in V\setminus V'$, and we can choose $w=v_\gamma$, so that $\{\phi(wu_\alpha), \phi(wu_\beta)\}=\{r-1,r\}$. Otherwise, suppose that at least one of $a,b,c,d$ is at least $r-1$. If $x\in V'$, then two of $a,b,c$ are $r-1$ and $r$, so that at least one of $a,b,d$, the colours at $y$, is $r-1$ or $r$. It follows that there exists $z\in V\setminus V'$ (either $z=x$ or $z=y$) such that $A_z$ contains $r-1$ or $r$. This implies that all members of $[r-2]^{(3)}$ are present among the $3$-sets labelling the vertices of $V\setminus V'$, since $[r-2]^{(3)}$ forms an initial segment in the colexicographic ordering, and a $3$-set containing $r-1$ or $r$ succeeds every $3$-set in $[r-2]^{(3)}$ in the ordering. Since $r-2\ge 6$, we can choose a suitable vertex $w\in V\setminus V'$ such that $A_w\in [r-2]^{(3)}$ and $A_w\subset [r-2]\setminus\{a,b,c,d\}$.

Finally, let $x\in U$ and $y\in V\setminus V'$. We choose $w\in V\setminus V'$ by choosing the $3$-set $A_w$ as follows. If $A_y=\{1,2,3\}$, then let $A_w=\{1,2,4\}$. If $A_y=\{a,b,c\}\neq\{1,2,3\}$ where $1\le a<b<c\le r$, then let $A_w=\{a-1,b,c\}$ if $a\ge 2$; or $A_w=\{a,b-1,c\}$ if $a=1$ and $b\ge 3$; or $A_w=\{a,b,c-1\}$ if $a=1$, $b=2$ and $c\ge 4$. Note that $A_w$ exists among the $3$-sets labelling $V\setminus V'$ since $A_w$ precedes $A_y=\{a,b,c\}$ in the colexicographic ordering. Then $y$ and $w$ are not in a rainbow $C_4$. Hence by the previous argument, $y$ and $w$ lie in a rainbow $C_6$, which thus contains $x$ and $y$.

Therefore, $\phi$ is a $2$-rainbow cycle colouring for $K_{3,n}$, and we have $crx_2(K_{3,n})\le r$.\\[1ex]
\indent (b)(iii) We have $crx_2(K_{m,n})\ge 8$ for $m\ge 4$ and $n>7^m$ from (c)(ii) below. We prove that $crx_2(K_{m,n})\le 8$ for $n\ge m\ge 4$. Define an $8$-colouring $\phi$ of $K_{m,n}$ as follows. Let $V'=\{v_1,v_2,v_3\}\subset V$, and assign to these vertices the $m$-vectors $\vec{v}_1=(4,3,2,1,1,\dots,1)$, $\vec{v}_2=(5,6,7,8,8,\dots,8)$ and $\vec{v}_3=(8,7,6,5,5,\dots,5)$. For every $v_j\in V\setminus V'$, assign the $m$-vector $\vec{v}_j=(1,2,3,4,4,\dots,4)$. For $v_j\in V$, let $\phi(u_iv_j)=\vec{v}_{ji}$ for $1\le i\le m$. We claim that $\phi$ is a $2$-rainbow cycle colouring for $K_{m,n}$. Let $U'=\{u_1,u_2,u_3\}\subset U$, and $x,y\in V(K_{m,n})$. We find a rainbow cycle containing $x$ and $y$. If $x\in U'$ and $y\in U$, we take $xv_1yv_2x$. If $x,y\in U\setminus U'$, we take $xwu_2v_1yv_2u_3v_3x$ for some $w\in V\setminus V'$. If $x\in V'$ and $y\in V$, we take $xu_1yu_2x$. If $x,y\in V\setminus V'$, we take $xu_2v_2u_3yu_4v_3u_1x$. Finally, if $x\in U$ and $y\in V$, then first choose $w\in U'\setminus\{x\}$, and note that the path $xyw$ is rainbow and uses colours either in $\{1,2,3,4\}$ or $\{5,6,7,8\}$. If the former, we can take $xywv_2x$ (note that $y\neq v_2$), and if the latter, we can take $xywv_1x$ (note that $y\neq v_1$). Hence, $crx_2(K_{m,n})\le 8$.\\[1ex]
\indent (c)(i) Let $n\ge k$. Clearly, we have $crx_k(K_{k,n})\le e(K_{k,n})=kn$. Now, suppose that we have a colouring of $K_{m,n}$, using fewer than $\lceil\frac{n}{k-1}\rceil$ colours. We may assume that $u_1v_1,\dots,u_1v_k$ have the same colour. Letting $S=\{v_1,\dots,v_k\}$, any cycle of $K_{k,n}$ containing $S$ must have the vertex set $U\cup S$, and so cannot be rainbow. Hence, $crx_k(K_{k,n})\ge \lceil\frac{n}{k-1}\rceil$.\\[1ex]
\indent (c)(ii) Let $m\ge 2k$ and $n>(k-1)(4k-1)^m$. Let $\phi$ be a $(4k-1)$-colouring of $K_{m,n}$, using the colours $1,2,\dots, 4k-1$. For $v_j\in V$, let $\vec{v}_j$ be the $m$-vector where $\vec{v}_{ji}=\phi(u_iv_j)$ for $1\le i\le m$. Then the lower bound on $n$ implies that, there exists a set of vertices $S\subset V$ with $|S|=k$, whose members are all assigned with the same $m$-vector. If there exists a rainbow cycle $C$ containing $S$, then in $C$, the distance between any two vertices of $S$ is at least $4$. Thus, $C$ has length at least $4k$, and so cannot be rainbow, a contradiction. Hence, $crx_k(K_{m,n})\ge 4k$.\\[1ex]
\indent (c)(iii) Let $3k\le m\le n$. We define a $6k$-colouring $\phi$ of $K_{m,n}$ as follows. Let $U'=\{u_1,\dots,u_{2k}\}\subset U$ and $V'=\{v_1,\dots,v_{2k}\}\subset V$. For $v_j\in V'$, we assign the $2k$-vector $\vec{v}_j=(2k-j+2,\dots,2k,1,2,\dots,2k-j+1)$, and let $\phi(u_iv_j)=\vec{v}_{ji}$ for $u_i\in U'$. For $u_i\in U\setminus U'$ and $v_j\in V'$, let $\phi(u_iv_j)=2k+j$.  For $u_i\in U'$ and $v_j\in V\setminus V'$, let $\phi(u_iv_j)=4k+j$. The edges $u_iv_j\in E(U\setminus U',V\setminus V')$ are coloured arbitrarily with used colours.

We claim that $\phi$ is a $k$-rainbow cycle colouring for $K_{m,n}$. Note that the vertices of $U\setminus U'$ are indistinguishable, and likewise for the vertices of $V\setminus V'$. Thus, it suffices to consider the subgraph $H$ of $K_{m,n}$ induced by $U'\cup U''\cup V'\cup V''$, where $U''=\{u_{2k+1},\dots, u_{3k}\}$ and $V''=\{v_{2k+1},\dots, v_{3k}\}$. Consider the edge set
\[
F=\{u_{2i-1}v_i:1\le i\le k\}\cup\{u_{2i}v_{k+i}:1\le i\le k-1\}\cup\{u_kv_{2k}\}.
\]
The edges of $F$ are rainbow coloured, since the colours of the edges in the three sets are $1,2,\dots,k$; $k+2,k+3,\dots,2k$; and $k+1$. Moreover, they almost form a perfect matching between $U'$ and $V'$: the edges $u_kv_{3k/2}$ (resp.~$u_kv_{\lceil k/2\rceil}$) and $u_kv_{2k}$ are incident at $u_k$ for $k$ even (resp.~odd), and the remaining edges are independent. Now, let $S\subset V(H)$ with $|S|=k$. We may assume that $S\cap V''\subset \{v_{2k+1},\dots,v_{3k-1}\}$, since the colouring $\phi$ restricted to $H$ has the same structure under the symmetry between the two classes of $H$; and $u_{2k}\not\in S$, since otherwise we may consider a shift of $F$. Relabelling the edges of $F$ as $\tilde{u}_1\tilde{v}_1,\dots,\tilde{u}_{2k-2}\tilde{v}_{2k-2},u_k\tilde{v}_{2k-1},u_kv_{2k}$, we obtain a rainbow cycle containing $S$ by connecting the edges $u_{2k+i}\tilde{v}_{2i}$, $u_{2k+i}\tilde{v}_{2i+1}$, $v_{2k+i}\tilde{u}_{2i-1}$ and $v_{2k+i}\tilde{u}_{2i}$ for $1\le i\le k-1$, and $u_{3k}\tilde{v}_1$ and $u_{3k}v_{2k}$. Hence, $crx_k(K_{m,n})\le 6k$.
%
\end{proof}

We now consider $crx_k(G)$ when $G$ is a complete multipartite graph. For $t\ge 2$, let $K_{n_1,\dots,n_t}$ be the complete $t$-partite graph with class sizes $1\le n_1\le\cdots\le n_t$. Let $K_{t\times n}$ be the balanced complete $t$-partite graph, where each class has $n$ vertices. We have the following result.

\begin{thm}\label{crxkKtxnthm}
\indent
\begin{itemize}
\item[(a)] $crx_1(K_{n_1,\dots,n_t})=3$ for $t\ge 3$ and $1\le n_1\le\cdots\le n_t$.
\item [(b)] For $k,t\ge 2$, there exists $N=N(k,t)$ such that for $n\ge N$, we have $crx_k(K_{t\times n})=2k$.
\end{itemize}
\end{thm}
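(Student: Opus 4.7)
For part (a), the lower bound is immediate: $K_{n_1,\dots,n_t}$ contains a triangle whenever $t\ge 3$, so $crx_1\ge g(K_{n_1,\dots,n_t})=3$. For the upper bound I would exhibit an explicit $3$-colouring $\phi$ in which every vertex lies on a rainbow triangle. Concretely: assign colour $1$ to every edge of $[V_1,V_2]$ and of $[V_i,V_j]$ with $3\le i<j\le t$; colour $2$ to every edge of $[V_1,V_j]$ with $j\ge 3$; and colour $3$ to every edge of $[V_2,V_j]$ with $j\ge 3$. A routine check shows that for $v\in V_1$ the triangle $vu_2u_3$ (any $u_j\in V_j$) has colours $1,3,2$, for $v\in V_2$ the triangle $vu_1u_3$ has colours $1,2,3$, and for $v\in V_j$ with $j\ge 3$ the triangle $vu_1u_2$ has colours $2,3,1$, each rainbow.

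For the lower bound in part (b), I would take $S$ to be any $k$ vertices in a single class $V_i$. Any cycle $C$ containing $S$ satisfies $|V(C)\cap V_i|\ge k$, and since $V_i$ is an independent set, consecutive members of $V(C)\cap V_i$ around $C$ must be separated by at least one vertex of $V(G)\setminus V_i$. Hence $C$ also has at least $k$ vertices outside $V_i$, so $|V(C)|\ge 2k$. A rainbow cycle of length at least $2k$ requires at least $2k$ colours, so $crx_k(K_{t\times n})\ge 2k$.

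For the matching upper bound, my plan is to adapt the probabilistic argument of Theorem~\ref{crxkKnthm}(b). Colour each edge of $K_{t\times n}$ independently and uniformly at random with one of $2k$ colours, and for each $k$-set $S\subseteq V(K_{t\times n})$ let $E_S$ be the event that no rainbow cycle contains $S$. For $t\ge 3$ (and also for $t=2$ when $S$ lies in a single class), one can build $\Omega(n)$ pairwise edge-disjoint cycles through $S$ using the template $s_1x_1s_2x_2\cdots s_kx_ks_1$, where $S=\{s_1,\dots,s_k\}$ is ordered cyclically and each ``extra'' $x_i$ is chosen from some class disjoint from $\{V(s_i),V(s_{i+1})\}$; at least one such class exists (size $n$) since $t\ge 3$, and taking the $k$-tuples of extras to be vertex-disjoint across cycles ensures edge-disjointness, as every edge of every such cycle joins $S$ to the extras. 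Each cycle is rainbow with probability $\tfrac{(2k)!}{(2k)^{2k}}>0$, so by independence $P(E_S)\le \exp(-\Omega(n))$, and the union bound yields $\sum_S P(E_S)\le \binom{tn}{k}\exp(-\Omega(n))=o(1)$ for $n$ large, producing a valid $(2k)$-colouring.

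The main obstacle will be the bipartite case $t=2$ with $S$ split across both classes, where every $C_{2k}$ through $S$ necessarily uses at least one edge with both ends in $S$, and so the template above does not yield edge-disjoint cycles. I would resolve this by conditioning on the colouring of the (at most $\binom{k}{2}$) potential $S$-$S$ edges and then constructing $\Omega(n)$ cycles through $S$ whose \emph{remaining} edges are pairwise disjoint, by placing the $S$-vertices on a fixed bipartite-alternating $C_{2k}$ template and filling the remaining $k$ positions with disjoint extras. Conditional on the $S$-$S$ colours, the rainbow events for these cycles become independent, and with positive probability over the $S$-$S$ colouring each has constant positive conditional rainbow probability, so the union-bound argument carries through with only minor adjustments.
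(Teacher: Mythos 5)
Most of your proposal is sound, and in places simpler than the paper. Part (a): your explicit $3$-colouring is correct (it is in effect a blow-up of a $3$-coloured $K_t$ in which every vertex lies on a rainbow triangle, which is exactly the mechanism the paper uses via Theorem \ref{crxkKnthm}(a)). The lower bound in (b) is the paper's argument verbatim. For the upper bound with $t\ge 3$, your alternating template $s_1x_1s_2x_2\cdots s_kx_ks_1$ with vertex-disjoint extras is a genuine simplification: since every edge of every template cycle joins $S$ to the extras, the rainbow events are honestly independent and $\mathbb P(E_S)\le(1-(2k)!/(2k)^{2k})^{\Omega(n)}$ follows cleanly, whereas the paper runs a uniform but heavier argument for all $t\ge 2$, conditioning on the colourings $\Phi$ of the subgraph $H$ induced between the parts of $S$ and counting bad colourings of blocks $H_j$.

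The gap is in your $t=2$ case with $S$ split across both classes. The union bound needs $\mathbb P(E_S)=o(n^{-k})$ for \emph{every} $k$-set $S$, but your argument only delivers $\mathbb P(E_S)\le \mathbb P(\textup{bad } S\textup{-}S\textup{ colouring})+(1-p)^{\Omega(n)}$, and the first term is a constant independent of $n$: if your fixed template uses two or more $S$-$S$ edges and two of them happen to receive the same colour (a constant-probability event), none of your template cycles can ever be rainbow and you control nothing. ``With positive probability over the $S$-$S$ colouring'' is therefore not ``minor adjustments'' --- it breaks the bound. The repair is to route each cycle through \emph{exactly one} $S$-$S$ edge, which is always possible when $S$ meets both classes: traverse the vertices of $S\cap V_1$ interleaved with class-$2$ extras, cross via a single $S$-$S$ edge to $S\cap V_2$ interleaved with class-$1$ extras, and close the cycle through one extra-extra pair. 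Then for every realisation of the conditioned colour, each cycle's remaining $2k-1$ edges are fresh and disjoint across cycles, giving conditional rainbow probability at least $(2k-1)!/(2k)^{2k-1}$ uniformly, and the union bound goes through. This single-$S$-edge routing is precisely the device in the paper's proof: the spanning cycle $C$ of $H_j$ is constructed so that it contains exactly one edge of $H$, so that $2k$ distinct colours can be completed no matter how $H$ was coloured.
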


\begin{proof}
(a) It suffices to prove that $crx_1(K_{n_1,\dots,n_t})\le 3$. By Theorem \ref{crxkKnthm}(a), there exists a rainbow cycle colouring $\phi$ of $K_t$, using three colours. We may obtain a rainbow cycle colouring of $K_{n_1,\dots,n_t}$ with three colours by taking a blow-up of $\phi$, where the $i$th vertex of $K_t$ is replaced with a vertex set of size $n_i$, for $1\le i\le t$. \\[1ex]
\indent (b) Let $V_1,\dots,V_t$ be the classes of $K_{t\times n}$, with $|V_i|=n$ for $1\le i\le t$. If $n\ge k$, then we have $crx_k(K_{t\times n})\ge 2k$, since any cycle containing $k$ vertices of $V_1$ has length at least $2k$.

We prove that $crx_k(K_{t\times n})\le 2k$ for $n$ sufficiently large. Consider a random $2k$-colouring of $K_{t\times n}$. For $S\subset V(K_{t\times n})$ with $|S|=k$, let $E_S$ be the event that there is no rainbow cycle containing $S$. We estimate the probability $\mathbb P(E_S)$. For $1\le i\le t$, let $X_i=S\cap V_i$, where indices involving $i$ are taken modulo $t$ throughout. Let $s_i=|S\cap V_i|$, so that $s_1+\cdots+s_t=k$. Let $1\le q\le t$ be the number of the $s_i$ that are non-zero. We may assume that $s_1\ge\cdots\ge s_q\ge 1$. For $2\le i\le q+1$,  let $Y_{i1},\dots, Y_{ih}\subset V_i\setminus X_i$ be disjoint sets with $h=\lfloor\frac{n-s_2}{s_1}\rfloor$, and $|Y_{ij}|=s_{i-1}$ for all $1\le j\le h$. We may find such sets $Y_{ij}$ since $(\frac{n-s_2}{s_1})s_{i-1}\le n-s_i$ for every $2\le i\le \min(q+1,t)$, and $(\frac{n-s_2}{s_1})s_t\le n-s_1$ if $q=t$ and $n\ge k$, so that 
\[
|Y_{i1}|+\cdots+|Y_{ih}|=hs_{i-1}\le \Big(\frac{n-s_2}{s_1}\Big)s_{i-1}\le n-s_i=|V_i\setminus X_i|.
\] 
Define the subgraph $H=\bigcup_{i=1}^{q-1}[X_i,X_{i+1}]$. Let $\Phi$ be the set of all $2k$-colourings of $H$ with no rainbow cycle $C_k$ containing $S$. By convention, if $q=1$, we have $H$ is the empty graph on $S=X_1\subset V_1$, and $\Phi$ consists of one trivial colouring. Let $H$ be coloured by $\phi\in\Phi$. For $1\le j\le h$, define the subgraph $H_j=[X_1,X_2\cup Y_{2j}]\cup [X_q\cup Y_{qj},Y_{q+1,j}]\cup\bigcup_{i=2}^{q-1}[X_i\cup Y_{ij},X_{i+1}\cup Y_{i+1,j}]$, where $H_j=[X_1,Y_{2j}]$ if $q=1$. There is at least one way to colour the edges of $E(H_j)\setminus E(H)$ so that we have a rainbow cycle containing $S$, with vertices in $V(H_j)$. Indeed, if $q=1$, choose a spanning cycle of $H_j=[X_1,Y_{2j}]$ and assign $2k$ distinct colours to it. Let $2\le q\le t$. For $1\le i\le q$, choose a spanning path of $[X_i,Y_{i+1,j}]$ with end-vertices $x_i\in X_i$ and $y_{i+1}\in Y_{i+1,j}$. Then, add the edges $u_1u_2,v_2u_3,\dots,v_{q-1}u_q,v_qv_{q+1}$. We have a spanning cycle $C$ of $H_j$ where $u_1u_2$ is the only edge of $H$ in $C$, and thus we may assign $2k$ distinct colours to $C$. Hence, there are some $\alpha_\phi<(2k)^{e(H_j)-e(H)}=(2k)^{e(H_1)-e(H)}$ colourings of $E(H_j)\setminus E(H)$ so that there is no rainbow cycle containing $S$, with vertices in $V(H_j)$. Let $\alpha=\max\{\alpha_\phi:\phi\in\Phi\}<(2k)^{e(H_1)-e(H)}$. Since there are ${tn\choose 2}-e(H)-(e(H_1)-e(H))h$ edges not in $\bigcup_{j=1}^h E(H_j)$, we have
\[
\mathbb P(E_S)\le \frac{|\Phi|\alpha^h(2k)^{{tn\choose 2}-e(H)-(e(H_1)-e(H))h}}{(2k)^{tn\choose 2}}\le \frac{\alpha^{h}}{(2k)^{(e(H_1)-e(H))h}}=o(n^{-k}).
\]
\indent By the union bound,
\[
\mathbb P\bigg(\bigcup_{S\subset V(K_{t\times n}),\,|S|=k}E_S\bigg)\le\sum_{S\subset V(K_{t\times n}),\,|S|=k}\mathbb P(E_S)\le {tn\choose k}o(n^{-k})<1,
\]
for $n$ sufficiently large. Hence, there exists a $k$-rainbow cycle colouring of $K_{t\times n}$, using $2k$ colours, and $crx_k(K_{t\times n})\le 2k$.
\end{proof}

In Theorems \ref{crxkKmnthm} and \ref{crxkKtxnthm}, we were only able to determine or estimate $crx_k(G)$ for some values of $k$ and some complete bipartite or multipartite graphs $G$. In particular, for fixed $k$ and $m$, and $n$ sufficiently large, we see that $crx_k(K_{k,n})=\Theta_n(n)$, and $crx_k(K_{m,n})=O_n(1)$ for $m\ge 3k$. We pose the following problem.

\begin{prob}
Let $k\ge 2$ 
\begin{enumerate}
\item[(a)] For $2\le m\le n$, determine $crx_k(K_{m,n})$ for the cases not covered by Theorems \ref{crxkKmnthm} and \ref{crxkKtxnthm}. For fixed $k$ and $m$, what is the behaviour of $crx_k(K_{m,n})$ for $k\le m\le 3k$ and large $n$?
\item[(b)] For $t\ge 3$ and $1\le n_1\le\cdots\le n_t$, determine $crx_k(K_{n_1,\dots,n_t})$ for the cases not covered by Theorem \ref{crxkKtxnthm}.
\end{enumerate}
\end{prob}

We conclude this section by considering $crx_k(G)$ where $G=Q_n$ is the $n$-dimensional discrete binary cube. That is, $V(Q_n)$ consists of all $n$-vectors $v=(v_1,\dots,v_n)$ with $v_i\in\{0,1\}$ for $1\le i\le n$, and for $u,v\in V(Q_n)$, we have $uv\in E(Q_n)$ if and only if $u$ and $v$ differ at exactly one coordinate. Note that $Q_n$ is an $n$-regular graph with $2^n$ vertices and $2^{n-1}n$ edges. If $n=p+q$, then $Q_n$ can be considered as a blow-up of $Q_q$, with each vertex of $Q_q$ replaced by a copy of $Q_p$, and an edge of $Q_q$ is replaced by a matching that matches the corresponding vertices of $Q_p$. We write $Q_n=Q_p\oplus Q_q$, and for $v\in V(Q_n)$, we write $v=\hat{v}\oplus\tilde{v}$, where $\hat{v}\in V(Q_p)$ and $\tilde{v}\in V(Q_q)$ are the vectors containing the first $p$ and the last $q$ coordinates of $v$.

Since $Q_n$ is Hamiltonian for $n\ge 2$, the parameter $crx_k(Q_n)$ is defined for all $1\le k\le 2^n$ and $n\ge 2$. In the following result, we determine $crx_k(Q_n)$ exactly for small and large $k$.

\begin{thm}\label{crx123Qnthm}
Let $n\ge 2$.
\begin{enumerate}
\item[(a)] $crx_1(Q_n)=4$.
\item[(b)] $crx_2(Q_n)=crx_3(Q_n)=2n$.
\item[(c)] $crx_k(Q_n)=2^n$ for $2^{n-1}\le k\le 2^n$.
\end{enumerate}
\end{thm}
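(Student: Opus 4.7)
For part (a), I would observe that $Q_n$ is bipartite with girth $4$, so $crx_1(Q_n) \ge g(Q_n) = 4$. For the matching upper bound I construct a $4$-colouring of $Q_n$ under which every vertex lies in a rainbow $4$-cycle. Write $Q_n = Q_2 \oplus Q_{n-2}$; the $2^{n-2}$ copies of $Q_2$ obtained by fixing the last $n-2$ coordinates are vertex-disjoint $4$-cycles that cover $V(Q_n)$. Within each such copy, I colour the direction-$1$ edges with colour $1$ or $2$ according to the value of coordinate $2$, and the direction-$2$ edges with colour $3$ or $4$ according to the value of coordinate $1$; a quick check shows every copy becomes a rainbow $C_4$. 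The remaining edges (in directions $3$ through $n$) are coloured arbitrarily from $\{1,2,3,4\}$, and since every vertex belongs to such a rainbow $4$-cycle this is a rainbow cycle colouring.

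For part (c), the upper bound is immediate: $Q_n$ admits a Hamilton cycle $H$ of length $2^n$ (a Gray code). I colour $H$ rainbow with $2^n$ colours and colour every other edge arbitrarily from those colours; then $H$ is a rainbow cycle containing every $S\subseteq V(Q_n)$, giving $crx_k(Q_n)\le 2^n$ for all $1\le k\le 2^n$. For the lower bound, let $A$ be one class of the bipartition of $Q_n$, so $|A|=2^{n-1}$. For $2^{n-1}\le k\le 2^n$, pick any $S$ with $A\subseteq S$ and $|S|=k$. Since $Q_n$ is bipartite, any cycle containing all of $A$ must alternate between the two bipartition classes, forcing an equal number of vertices from each side and hence a Hamilton cycle of length $2^n$. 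Any rainbow such cycle therefore uses at least $2^n$ colours, giving $crx_k(Q_n)\ge 2^n$.

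For part (b), the lower bound is quick: for any two antipodal vertices $x,y$ (with $y=\mathbf{1}-x$) we have $d(x,y)=n$, so every cycle of $Q_n$ containing both has length at least $2n$, giving $crx_2(Q_n)\ge 2n$ and hence $crx_3(Q_n)\ge 2n$. For the upper bound it suffices, by monotonicity, to exhibit a $2n$-colouring that is a $3$-rainbow cycle colouring. I would use colours $\{1,2,\dots,2n\}$ and assign each edge $uv$ in direction $i$ a colour in $\{2i-1,2i\}$ according to a parity rule on the remaining coordinates of $u$ (well-defined since $u$ and $v$ agree on all coordinates except $i$). Given a triple $\{x,y,z\}$ spanning a subcube of effective dimension $d$ (the number of coordinates on which they do not all agree), the pairwise Hamming distances satisfy $d(x,y)+d(y,z)+d(z,x)=2d\le 2n$, and I would construct a rainbow cycle of length $2d$ containing the triple by concatenating three shortest paths, ordering the coordinate flips along the paths so that the two edges of each direction used in the cycle receive the two distinct colours of that direction.

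The main obstacle is designing the parity rule together with the routing so that the rainbow property holds for every triple. The case $d\le 2$ is direct, since the three vertices then lie in a single $2$-face of $Q_n$, which is a $C_4$ made rainbow by the argument of (a). When $3\le d\le n$, the cycle has length at least $6$, and one must verify that the chosen routing and parity rule jointly force the two edges in each used direction to carry different colours. I would proceed by case analysis on the structure of $\{x,y,z\}$ inside the spanned $Q_d$, for instance on the multiset of pairwise distances, exploiting the freedom to reorder the coordinate flips along each path; the delicate subcase is $d=n$ with a generic triple, where one has to pin down a canonical ordering of coordinate flips within each segment and show that the parity rule then makes the resulting cycle rainbow.
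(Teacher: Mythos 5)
Your parts (a) and (c), and the lower bound in (b), are correct and essentially the paper's own arguments: girth plus a decomposition into rainbow copies of $Q_2$ for (a); a rainbow Hamilton cycle for the upper bound and a set containing one bipartition class (forcing any containing cycle to alternate and hence be Hamiltonian) for the lower bound in (c); and an antipodal pair, giving shortest containing cycle of length $2n$, for (b).

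The gap is the upper bound $crx_3(Q_n)\le 2n$, which is the heart of the theorem, and your proposal does not prove it: you never specify the parity rule, and you explicitly defer the verification that the rule and the routing are compatible (``the delicate subcase is $d=n$\dots''). That deferred step is not a routine check; in fact your specific plan --- a rainbow cycle of length exactly $2d$ through each triple --- provably fails for the most natural rule. Suppose direction $i$ is coloured from $\{2i-1,2i\}$ according to the parity of $\sum_{c\neq i}u_c$. On a cycle of length $2d$ using each active direction exactly twice, the two direction-$i$ edges, at positions $p_1<p_2$ in the cyclic edge order, receive distinct colours if and only if the $p_2-p_1-1$ intermediate edges (all in other directions) have odd count, i.e.\ iff $p_1$ and $p_2$ have the same parity. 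But the positions $1,\dots,2d$ sum to $d(2d+1)$, which is odd when $d$ is odd, whereas $d$ same-parity pairs would sum to an even number --- so for every odd $d$, no length-$2d$ cycle is rainbow under this rule. Concretely, $x=000$, $y=110$, $z=101$ in $Q_3$ has no rainbow $C_6$; and since each direction carries only two colours, any rainbow cycle uses each direction at most twice, so for a triple spanning $d=n$ coordinates the cycle length is \emph{forced} to be exactly $2n$: for odd $n$ the global-sum rule therefore admits no rainbow cycle through such a triple at all. Any valid rule must break this symmetry, and identifying one is precisely the content you have omitted.

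The paper sidesteps the per-triple case analysis by induction on $n$: write $Q_n=Q_{n-1}\oplus Q_1$, colour both copies of $Q_{n-1}$ \emph{identically} with an inductively obtained $3$-rainbow cycle colouring on the colours $1,\dots,2n-2$, and colour the matching edge at $\hat u$ with $2n-1$ or $2n$ according to the parity of $\sum\hat u_i$. Given $x,y$ in one copy and $z$ in the other, one takes the inductive rainbow cycle $C_-$ through $x,y,\hat z\oplus(0)$ and its mirror image $C_+$ (same colours, by the identical colouring), and splices complementary arcs of $C_-$ and $C_+$ through two matching edges; these carry the two new colours either immediately or after shifting one splice point to an adjacent cycle vertex, since weight parity alternates along a cycle. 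Unrolled, this colouring is exactly of your proposed shape with $f_i(u)=\sum_{c<i}u_c$ --- a nested, asymmetric choice --- so your ansatz can be made to work, but the verification is the inductive splicing, not a canonical ordering of coordinate flips, and without it your proof of (b) is incomplete. (A smaller unchecked point in the same direction: your three concatenated geodesics form a priori only a closed walk, so internal vertex-disjointness would also need an argument.)
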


\begin{proof}
(a) Clearly $crx_1(Q_2)=4$. Let $n\ge 3$. We have $crx_1(Q_n)\ge g(Q_n)=4$. Since  $Q_n=Q_2\oplus Q_{n-2}$, we have a rainbow cycle colouring of $Q_n$ with four colours where each of the $2^{n-2}$ copies of $Q_2$ is given a rainbow colouring. Hence, $crx_1(Q_n)\le 4$.\\[1ex]
\indent (b) It suffices to prove that $crx_2(Q_n)\ge 2n$ and $crx_3(Q_n)\le 2n$. We have $crx_2(Q_n)\ge 2n$, since the shortest cycle containing the vertices $(0,\dots , 0)$ and $(1,\dots , 1)$ has length $2n$. To prove that $crx_3(Q_n)\le 2n$, we construct a $3$-rainbow cycle colouring of $Q_n$, using $2n$ colours. We proceed inductively. For $n=2$, we give $Q_2$ a rainbow colouring with colours $1,2,3,4$. Let $n\ge 3$, and suppose that we have constructed a $3$-rainbow cycle colouring $\phi'$ of $Q_{n-1}$, using the colours $1,\dots , 2(n-1)$. We have $Q_n=Q_{n-1}\oplus Q_1$, and for $v\in Q_n$, we have $v=\hat{v}\oplus \tilde{v}$, where $\hat{v}\in V(Q_{n-1})$ and $\tilde{v}\in V(Q_1)$. Let $Q_{n-1}^-$ and $Q_{n-1}^+$ be the copies of $Q_{n-1}$ containing the vertices $v\in V(Q_n)$ with $\tilde{v}=(0)$ and $\tilde{v}=(1)$, respectively. Define the colouring $\phi$ of $Q_n$, with colours $1,\dots , 2n$, as follows. For $uv\in E(Q_n)$, let
\[
\phi(uv)=\left\{
\begin{array}{ll}
\phi'(\hat{u}\hat{v}), & \textup{if }\tilde{u}=\tilde{v},\\[1ex]
2n-1, & \displaystyle\textup{if }\hat{u}=\hat{v},\,\tilde{u}\neq\tilde{v}\textup{, and }\sum \hat{u}_i=\sum \hat{v}_i\equiv 0\textup{ (mod }2),\\[1ex]
2n, & \displaystyle\textup{if }\hat{u}=\hat{v},\,\tilde{u}\neq\tilde{v}\textup{, and }\sum \hat{u}_i=\sum \hat{v}_i\equiv 1\textup{ (mod }2).
\end{array}
\right.
\]

We claim that $\phi$ is a $3$-rainbow cycle colouring for $Q_n$. Let $x,y,z\in V(Q_n)$. If $x,y,z\in V(Q_{n-1}^-)$ or $x,y,z\in V(Q_{n-1}^+)$, then by induction, $x,y$ and $z$ lie in a rainbow cycle. Now by the symmetry between $Q_{n-1}^-$ and $Q_{n-1}^+$, assume that $x,y\in V(Q_{n-1}^-)$ and $z\in V(Q_{n-1}^+)$. Let $x_+, y_+\in V(Q_{n-1}^+)$ and $z_-\in V(Q_{n-1}^-)$ where $x_+=\hat{x}\oplus (1)$, $y_+=\hat{y}\oplus(1)$ and $z_-=\hat{z}\oplus (0)$. By induction, there are rainbow cycles $C_-\subset Q_{n-1}^-$ and $C_+\subset Q_{n-1}^+$ such that, $C_-=a_1a_2\cdots a_ta_1$, $C_+=b_1b_2\cdots b_tb_1$ for some $t\ge 4$; $\hat{a}_p=\hat{b}_p$ and $\phi(a_pa_{p+1})=\phi(b_pb_{p+1})\in \{1,\dots , 2(n-1)\}$ for every $1\le p\le t$, where indices referring to $C_-,C_+$ are taken modulo $t$ throughout; and $x,y,z_-\in V(C_-)$ and $x_+,y_+,z\in V(C_+)$, with $(x,y,z_-)=(a_i,a_j,a_\ell)$ and $(x_+,y_+,z)=(b_i,b_j,b_\ell)$, where we may assume that $1\le i\le \ell< j\le t$. Let $P\subset C_-$ be the $y-x$ path $a_ja_{j+1}\cdots a_i$, and $R\subset C_+$ be the $x_+-y_+$ path $b_ib_{i+1}\cdots b_j$. Note that $z\in V(R)$. If $\{\phi(xx_+),\phi(yy_+)\}=\{2n-1,2n\}$, we have the rainbow cycle $xx_+Ry_+yPx$. If $\phi(xx_+)=\phi(yy_+)=2n-1$ or $2n$, then consider the vertices $a_{j-1}$ and $b_{j-1}$. Note that $a_{j-1}\neq x$ and $b_{j-1}\neq x_+$, since $C_-$ and $C_+$ have length at least $4$, and $\phi(xx_+)=\phi(yy_+)$ means that $x$ and $y$ are not neighbours in $C_-$, and $x_+$ and $y_+$ are not neighbours in $C_+$. Also, we have $\{\phi(xx_+),\phi(a_{j-1}b_{j-1})\}=\{2n-1,2n\}$. We have the rainbow cycle $xx_+Rb_{j-1}a_{j-1}yPx$.

Hence, $\phi$ is a $3$-rainbow cycle colouring for $Q_n$ with $2n$ colours, and we are done by induction.\\[1ex]
\indent (c) Let $2^{n-1}\le k\le 2^n$. Clearly we have $crx_k(Q_n)\le 2^n$ since $Q_n$ is Hamiltonian. Now, $Q_n$ is a balanced bipartite graph with classes $V_0$ and $V_1$, where $V_0$ (resp.~$V_1$) consists of the $2^{n-1}$ vertices at even (resp.~odd) distance from $(0,\dots,0)$. Let $S$ be a set of $k$ vertices containing $V_0$. Then, every cycle of $Q_n$ containing $S$ must have length $2^n$. Hence, $crx_k(Q_n)\ge 2^n$.
\end{proof}

For $4\le k<2^{n-1}$, the exact determination of $crx_k(Q_n)$ appears to be more difficult. In the following result, we give a partial answer to this problem, by showing that $crx_k(Q_n)$ is linear in $n$, for fixed $k\ge 4$ and $n$ sufficiently large. 

\begin{thm}\label{crx4Qnthm}
Let $k\ge 4$ and $n\ge 4k^2$. Then there exist constants $c_k,C_k>0$ (depending only on $k$) such that
\begin{equation}
c_kn\le crx_k(Q_n)\le C_kn.\label{crx4Qneq}
\end{equation}
Thus, we have $crx_k(Q_n)=\Theta_n(n)$.
\end{thm}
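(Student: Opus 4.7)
The lower bound is straightforward. By the monotonicity of $crx_k$ in $k$ noted in the introduction and Theorem~\ref{crx123Qnthm}(b), we have $crx_k(Q_n)\ge crx_2(Q_n)=2n$, so $c_k=2$ suffices. (A stronger bound $c_k=k/2$ can be derived by choosing, for $n\ge 4k^2$, a set of $k$ vertices of $Q_n$ at pairwise Hamming distance at least $n/2$ via a binary constant-weight code; any cycle through such a set has length at least $kn/2$.)

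The upper bound is the main work. My plan is to induct on $n$, generalising the parallel-cycles argument used for $crx_3(Q_n)\le 2n$ in the proof of Theorem~\ref{crx123Qnthm}(b). As a base case I take $n_0=4k^2$, for which $crx_k(Q_{n_0})\le e(Q_{n_0})=n_0\cdot 2^{n_0-1}$ is a constant depending only on $k$. For the inductive step I aim to show
\[
crx_k(Q_n)\le crx_k(Q_{n-1})+A_k
\]
for some $A_k$ depending only on $k$; iterating this gives $crx_k(Q_n)\le C_kn$ with $C_k$ a suitable constant.

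To carry out the step, let $\phi'$ be a $k$-rainbow cycle colouring of $Q_{n-1}$ with $T$ colours, decompose $Q_n=Q_{n-1}^-\cup Q_{n-1}^+\cup M$, and extend $\phi'$ to a colouring $\phi$ of $Q_n$ by applying $\phi'$ identically to both copies of $Q_{n-1}$ and colouring the matching edges $M$ with a palette of $A_k$ new colours. For a $k$-subset $S=S^-\cup S^+$ meeting both halves, let $\tilde{S}^+\subset V(Q_{n-1}^-)$ be the mirror of $S^+$. The inductive hypothesis applied to $S^-\cup\tilde{S}^+$ yields a rainbow cycle $C^-=a_1a_2\cdots a_t$ in $Q_{n-1}^-$, and the parallel cycle $C^+=b_1b_2\cdots b_t$ in $Q_{n-1}^+$ (where $b_j$ is the mirror of $a_j$) satisfies $\phi(a_ja_{j+1})=\phi(b_jb_{j+1})$ for every $j$ and passes through $\tilde{S}^-\cup S^+$. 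A rainbow cycle in $Q_n$ through $S$ is then assembled by choosing an even number $r\le k$ of crossing positions $q_1<\cdots<q_r$ on $[t]$ and alternating between $-$-side arcs of $C^-$ (covering the $S^-$-vertices) and $+$-side arcs of $C^+$ (covering the $S^+$-vertices), joined by the matching edges $a_{q_i}b_{q_i}$. Since each edge-position of $C^-$ is then covered by exactly one arc, every $\phi'$-colour is used exactly once in the combined cycle, so it is rainbow in the $\phi'$-colours; it remains only to ensure that the $r$ matching edges used also carry pairwise distinct new colours.

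The main obstacle is the design of the matching colouring with $A_k=O(k)$ colours so that the required crossings admit distinct colours for every possible $S$ and resulting $C^-$. In the worst case $S^-\cup\tilde{S}^+$ may form a tight cycle of length $t=k$ in $Q_{n-1}^-$, so that all crossing positions are forced among the $k$ distinguished vertices; this rules out naive deterministic constructions with $o(n)$ colours and is the delicate point of the argument. My plan is to colour each $vv'\in M$ by the pair $(\pi(v),h(v))$, where $\pi(v)\in\{0,1\}$ is the parity of $v$ (generalising the parity trick used for $k=3$, which forces parities to alternate along $C^-$) and $h\colon V(Q_{n-1}^-)\to[k-1]$ is a hash depending on a fixed block of coordinates of $v$. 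The freedom to re-select $C^-$ from among the many rainbow cycles provided by the inductive hypothesis when $n$ is large, and to shift crossing positions into gaps between labelled vertices of $C^-$ whenever such gaps exist, should suffice to secure distinct matching colours and hence a rainbow combined cycle in every configuration, yielding $A_k=O(k)$ (or, at worst, $A_k=O(k^2)$, which would still give an $O_k(n)$ upper bound).
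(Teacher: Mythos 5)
Your lower bound is fine: monotonicity in $k$ together with Theorem \ref{crx123Qnthm}(b) gives $crx_k(Q_n)\ge crx_2(Q_n)=2n$, which suffices for the theorem as stated (the paper works harder, using a Hadamard-matrix construction to get $k$ vertices at pairwise distance greater than $n/2$ and hence $c_k=k/2$, which is what your parenthetical sketch would need to make precise). The upper bound, however, has a genuine gap, and it sits exactly where you flag it. Your inductive step needs the $r\le k$ matching edges $a_{q_i}b_{q_i}$ to receive pairwise distinct colours from a palette of size $O(k)$, in every configuration, including the tight one where $C^-$ has length $k$ and all crossing positions are forced at the labelled vertices. Your proposed rescue has two unproved ingredients. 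First, ``the freedom to re-select $C^-$ from among the many rainbow cycles provided by the inductive hypothesis'' is not available: the inductive hypothesis $crx_k(Q_{n-1})\le T$ guarantees only that \emph{some} rainbow cycle exists through each $k$-set, not many, and in the tight case the one cycle you are handed may be precisely the bad one. To get a supply of alternative cycles you would have to strengthen the induction statement, which you have not formulated. Second, the hash $h\colon V(Q_{n-1}^-)\to[k-1]$ is a fixed function chosen before $S$, and you give no construction, nor an argument, showing that for every $k$-set whose forced crossing vertices are adversarially placed, the pairs $(\pi(v),h(v))$ at those vertices are pairwise distinct; the proposal's own phrasing (``should suffice'') concedes this. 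Since the whole point of the step is a per-dimension cost of $O(k)$ colours, and the delicate case defeats the only mechanisms offered, the induction does not close.

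It is instructive to see how the paper avoids this bookkeeping entirely. Rather than peeling off one coordinate at a time and colouring the matching, it decomposes in blocks, $Q_n=Q_{n-K}\oplus Q_K$ with $K=11k$, gives the two factors \emph{disjoint} colour palettes, and strengthens the inductive assertion to: for every \emph{ordered} $k$-tuple $S$ (repetitions allowed) there is a rainbow $S$-subdivided closed walk. The strengthening is essential because projecting a $k$-set of $Q_n$ onto a factor can identify vertices; the base case is supplied by the Thomas--Wollan theorem ($10k$-connected implies $k$-linked), which yields the subdivided closed walks in $Q_K$. A walk in $Q_n$ is then assembled from one rainbow walk in each factor, each path $P_i$ travelling along two copies of a factor path joined through a copy of the other factor's path; since the combined walk uses each factor's colours exactly once and the palettes are disjoint, rainbowness is automatic and no ``distinct crossing colours'' condition ever arises. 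If you want to salvage your incremental approach, the minimal fix would likewise be to strengthen what you carry through the induction (e.g., many vertex-disjoint or colour-flexible cycles per $k$-set), but as written the step fails.
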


Before we prove Theorem \ref{crx4Qnthm}, we gather some tools in order to enable us to prove the upper bound. A graph $G$ is \emph{$k$-linked} if $|V(G)|\ge 2k$, and for every sequence $s_1, \dots , s_k, t_1, \dots , t_k$ of distinct vertices in $G$, there exist disjoint paths $P_1, \dots , P_k$ such that the end-vertices of $P_i$ are $s_i$ and $t_i$, for $1\le i\le k$. Clearly, a $k$-linked graph is also $k$-connected, but the converse is not true. Jung \cite{Jun70}, and Larman and Mani \cite{LM70}, showed that there exists a function $f(k)$ such that every $f(k)$-connected graph is $k$-linked, and it was shown in \cite{LM70} that $f(k)=2^{3k \choose 2}+2k$. Bollob\'as and Thomason \cite{BT96} then made a significant improvement, when they proved that the result holds for $f(k)=22k$. Thomas and Wollan \cite{TW05} further improved this to $f(k)=10k$.

\begin{thm}[\cite{TW05}, Corollary 1.3]\label{TWthm}
If $G$ is a $10k$-connected graph, then $G$ is k-linked.
\end{thm}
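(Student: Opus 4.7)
The cited theorem is the corollary derived by Thomas and Wollan from their main density-type linkage theorem, and the cleanest path to it is to first prove a stronger density version: any graph $G$ with $|V(G)|\ge 2k$, suitable base connectivity, and $|E(G)|\ge 5k|V(G)|$ is $k$-linked. The reduction to the corollary is then a one-line observation: a $10k$-connected graph has $\delta(G)\ge 10k$, so $|E(G)|\ge 5k|V(G)|$, and the base connectivity is automatic.

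For the density version the plan is an induction on $|V(G)|+|E(G)|$. Fix a minimum counterexample $G$ together with terminals $X=\{s_1,t_1,\dots,s_k,t_k\}$ and a pairing for which no linkage exists. Minimality must be exploited carefully: every edge not incident with $X$ should be \emph{essential}, in the sense that removing any non-terminal vertex or contracting any non-terminal edge either destroys the density hypothesis or immediately yields a linkage by induction. The aim is to locate a local reduction --- an edge $e=uv$ with neither endpoint in $X$ whose contraction preserves the edge-density bound and produces a strictly smaller counterexample, which by induction must be $k$-linked, giving the sought contradiction. The density hypothesis $|E(G)|\ge 5k|V(G)|$ guarantees a vertex of high degree whose neighbourhood is rich enough to afford such a contraction.

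The central technical device is the notion of a \emph{non-separating} edge: an edge whose contraction leaves the graph connected and whose contraction preserves both the density hypothesis and a ``coherence'' of the terminal set, so that a linkage in the contracted graph pulls back to a linkage in $G$. One argues, using the density hypothesis, that such non-separating edges must exist unless $G$ has a very restrictive structure, and in that degenerate structure one can exhibit a linkage directly. This step is where the notion of a \emph{rooted minor} enters: one is really looking for a $K_{2k}$-minor of $G$ in which each terminal is rooted at a distinct branch set.

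The main obstacle is the bookkeeping in the inductive step: after contracting $e=uv$ one must simultaneously verify that the new graph still satisfies the density hypothesis with the same $k$, identify the new terminal set and pairing (careful when $u$ or $v$ is adjacent to a terminal), and lift a linkage in the smaller graph back to $G$ without creating vertex conflicts among the routed paths. This accounting is precisely where Thomas and Wollan improve on the earlier Bollob\'as--Thomason constant of $22k$: a tighter analysis of how much of the density budget is consumed by each contraction yields the sharper constant in the hypothesis, giving the $10k$-connectivity threshold of the corollary.
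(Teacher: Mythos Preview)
The paper does not prove this theorem at all: it is quoted verbatim as Corollary~1.3 of Thomas and Wollan~\cite{TW05} and used as a black box in the proof of Lemma~\ref{crx4Qnlem}. So there is nothing in the paper's argument to compare against.

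What you have written is a reasonable high-level outline of the Thomas--Wollan strategy: the density theorem (every $2k$-connected graph with at least $5k|V(G)|$ edges is $k$-linked), the reduction of the $10k$-connectivity corollary to it via $\delta(G)\ge 10k\Rightarrow |E(G)|\ge 5k|V(G)|$, and the extremal/contraction argument leading to a rooted $K_{2k}$-minor. As a summary of the method this is accurate in spirit. But it is only a sketch, not a proof: the substantive work in~\cite{TW05} lies precisely in the steps you describe as ``bookkeeping'' and ``tighter analysis'', and none of that is actually carried out here. In the context of the present paper, though, no proof is expected --- the result is simply cited --- so your proposal goes well beyond what the paper does, without however constituting a self-contained proof.
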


Let $S=(v_1,\dots,v_k)$ be an ordered $k$-tuple of vertices in a graph $G$, where repetitions are permitted for the vertices. That is, $S=(v_1,\dots,v_k)\in V(G)^k$. An \emph{$S$-subdivided closed walk} is a sequence $W=(v_1, P_1,v_2,P_2,v_3,\dots, v_k, P_k,v_1)$ such that, $P_i$ is a $v_i-v_{i+1}$ path for $1\le i\le k$, and no two $P_i$ and $P_j$ have an internal vertex in common. If $v_i=v_{i+1}$, then $P_i$ is trivial, and if $v_i\neq v_{i+1}$, then $P_i$ has length at least $2$. The indices of the vertices $v_i$ and the paths $P_i$ are taken modulo $k$ throughout. By using the above result, we have the following lemma, which says that for $Q_n$, if $n$ is sufficiently large, so that the vertex connectivity of $Q_n$ is large, then $Q_n$ contains an $S$-subdivided closed walk for every $S\in V(Q_n)^k$.

\begin{lem}\label{crx4Qnlem}
Let $k\ge 4$ and $n\ge 11k$. Then for all ordered $k$-tuple $S=(v_1,\dots,v_k)$ of vertices in $Q_n$, where repetitions of the vertices are permitted in $S$, there exists an $S$-subdivided closed walk in $Q_n$.
\end{lem}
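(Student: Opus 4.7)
The plan is to reduce the lemma to an application of Theorem \ref{TWthm}. Let $m$ denote the number of indices $i\in\{1,\dots,k\}$, taken cyclically, for which $v_i\ne v_{i+1}$. A cyclic sequence cannot have exactly one transition, so either $m=0$ or $m\ge 2$. If $m=0$ then $v_1=\cdots=v_k$, and taking every $P_i$ to be the trivial path at $v_1$ already produces the required closed walk. Hence assume $m\ge 2$, and let $x_1,\dots,x_m$ be the cyclic sequence obtained by collapsing runs of identical consecutive vertices in $v_1,\dots,v_k$, so that $x_j\ne x_{j+1}$ cyclically (though the $x_j$ need not all be distinct). It suffices to exhibit internally vertex-disjoint paths $Q_1,\dots,Q_m$ in $Q_n$, with $Q_j$ joining $x_j$ to $x_{j+1}$: then setting $P_i:=Q_j$ whenever $i$ indexes the $j$-th transition, and $P_i$ trivial otherwise, supplies an $S$-subdivided closed walk.

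For each $j=1,\dots,m$ in turn, I would greedily select two distinct neighbours $a_j,b_j$ of $x_j$ in $Q_n$ so that the $2m$ chosen neighbours are all distinct and none lies in $\{x_1,\dots,x_m\}$. At step $j$ the number of forbidden vertices among the $n$ neighbours of $x_j$ is at most $(m-1)+2(j-1)+1\le 3m-2$, so the choice is always possible since $n\ge 11k\ge 11m$. Now set $G':=Q_n-\{x_1,\dots,x_m\}$. Because $Q_n$ is $n$-connected, $G'$ is $(n-m)$-connected, and $n-m\ge 11k-k=10k\ge 10m$, so Theorem \ref{TWthm} (applied with $m$ in place of $k$) says that $G'$ is $m$-linked.

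Next, apply the $m$-linkedness of $G'$ to the $m$ pairs $(b_j,a_{j+1})$ for $j=1,\dots,m$ (indices modulo $m$), whose $2m$ endpoints are pairwise distinct by the construction of the $a_j, b_j$, to obtain pairwise vertex-disjoint paths $R_1,\dots,R_m$ in $G'$ with $R_j$ joining $b_j$ to $a_{j+1}$. Define $Q_j$ to be the concatenation $x_j\,b_j\,R_j\,a_{j+1}\,x_{j+1}$. Its internal vertices are $b_j$, the internal vertices of $R_j$, and $a_{j+1}$, all of which lie in $V(G')$ and are therefore disjoint from $\{x_1,\dots,x_m\}$. Across distinct indices $j$ the internal vertex sets are disjoint, since the $R_j$'s are pairwise vertex-disjoint and the $a_j$'s and $b_j$'s were chosen to be all distinct. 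Each $Q_j$ has length at least $3$, so the condition ``$P_i$ has length at least $2$ when $v_i\ne v_{i+1}$'' is satisfied automatically.

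The main obstacle is the calibration of connectivity: the hypothesis $n\ge 11k$ is chosen so that after deleting up to $k$ vertices $x_1,\dots,x_m$ from $Q_n$, the remaining graph still has connectivity at least $10m$ and is therefore $m$-linked. The cyclic linkage actually required here (paths forming a closed walk, with each $x_j$ an endpoint of two consecutive paths) does not fit the hypothesis of the standard linkage theorem, whose $2m$ endpoints must be pairwise distinct; the splitting-via-neighbours trick, which replaces each endpoint $x_j$ by two fresh vertices $a_j,b_j$ of $N(x_j)$, is the device that reduces the cyclic problem to a standard linkage instance.
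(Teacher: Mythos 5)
Your proposal is correct and follows essentially the same route as the paper's proof: delete the specified vertices from $Q_n$, use the $n$-connectivity of $Q_n$ together with Theorem \ref{TWthm} to get a linked graph, replace each specified vertex by fresh distinct neighbours serving as path endpoints, link those endpoints cyclically, and reassemble. The only (harmless) difference is bookkeeping: you collapse repeated consecutive vertices first and invoke $m$-linkedness for the $m$ transitions, whereas the paper keeps all $k$ pairs, allocates $2m_p$ neighbour ``gates'' per distinct vertex according to multiplicity, and discards the linkage paths at indices with $v_i=v_{i+1}$.
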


\begin{proof}
Let $S=(v_1,\dots,v_k)\in V(Q_n)^k$ be an ordered $k$-tuple, and $u_1,\dots,u_\ell$ be the distinct members of $S$, for some $1\le \ell\le k$. Let $G=Q_n-\{u_1,\dots,u_\ell\}$. It is well-known that the vertex connectivity of $Q_n$ is $n$, and thus $G$ is $10k$-connected. By Theorem \ref{TWthm}, $G$ is $k$-linked.

Now for $1\le p\le\ell$, let $m_p$ be the multiplicity of $u_p$ in $S$, so that $\sum_{p=1}^\ell m_p=k$. Since $u_p$ has at least $10k$ neighbours in $Q_n$ lying in $V(G)$, we can choose disjoint sets $U_1,\dots, U_\ell\subset V(G)$ such that $|U_p|=2m_p$, and all vertices of $U_p$ are neighbours of $u_p$ in $Q_n$, for $1\le p\le \ell$. We choose distinct vertices $s_1,t_1,\dots,s_k,t_k\in\bigcup_{p=1}^\ell U_p$ as follows. For $1\le i\le k$, let $v_i=u_p$ and $v_{i+1}=u_q$, and choose $s_i\in U_p$, $t_i\in U_q$. This can be done, since a vertex $u_p$ is considered exactly $2m_p=|U_p|$ times as we consider all consecutive pairs $(v_i,v_{i+1})$. Since $G$ is $k$-linked, we have disjoint paths $L_1,\dots,L_k$ in $G$ such that $L_i$ connects $s_i$ and $t_i$ for $1\le i\le k$. Define the paths $P_1,\dots, P_k$ as follows. For $1\le i\le k$, if $v_i=v_{i+1}$, then $P_i$ is trivial. Otherwise, if $v_i\neq v_{i+1}$, then let $P_i=v_is_iL_it_iv_{i+1}$, and note that $P_i$ has length at least $3$. Then, 
$(v_1,P_1,\dots,v_k,P_k,v_1)$ is an $S$-subdivided closed walk in $Q_n$, as required.
\end{proof}

We are now ready to prove Theorem \ref{crx4Qnthm}.

\begin{proof}[Proof of Theorem \ref{crx4Qnthm}]
We first prove the upper bound of (\ref{crx4Qneq}). Set $K=11k$. We show that the upper bound holds for $n\ge K=11k$ and $C_k=2^{2K-1}=2^{22k-1}$. We prove the stronger assertion that there exists a colouring of $Q_n$, using at most $C_kn$ colours, such that for every ordered $k$-tuple of vertices $S=(v_1,\dots,v_k)\in V(Q_n)^k$, there exists a rainbow $S$-subdivided closed walk $W$. Thus in particular, if $v_1,\dots,v_k$ are distinct, then $W$ is a rainbow cycle that visits $v_1,\dots,v_k$, in that order. 

Let $n=aK+b$, where $a\ge 1$ and $0\le b<K$. It suffices to construct the required colouring of $Q_n$, using at most $C_kaK$ colours. We use induction on $a$. For $a=1$, we have $n=K+b$, and $e(Q_n)\le e(Q_{2K-1})=2^{2K-2}(2K-1)<C_kK$. Thus, we may use a rainbow colouring for $Q_n$, and the strong assertion holds by Lemma \ref{crx4Qnlem}. Now, let $n=aK+b$ with $a\ge2$, and suppose that the strong assertion holds for $Q_{(a-1)K+b}$ and $Q_K$. We have $Q_{aK+b}=Q_{(a-1)K+b}\oplus Q_K$. By the induction hypothesis, there exist colourings $\eta$ on $Q_{(a-1)K+b}$ and $\theta$ on $Q_K$, using at most $C_k(a-1)K$ and $C_kK$ colours respectively, and each colouring satisfies the strong assertion. We define the colouring $\phi$ on $Q_{aK+b}$, using at most $C_kaK$ colours, as follows. Let $uv\in E(Q_{aK+b})$, with $u=\hat{u}\oplus\tilde{u}$ and $v=\hat{v}\oplus\tilde{v}$, where $\hat{u}, \hat{v}\in V(Q_{(a-1)K+b})$ and $\tilde{u},\tilde{v}\in V(Q_K)$. Let
\[
\phi(uv)=
\left\{
\begin{array}{ll}
\eta(\hat{u}\hat{v}), & \textup{if }\hat{u}\hat{v}\in E(Q_{(a-1)K+b})\textup{ and }\tilde{u}=\tilde{v},\\
\theta(\hat{u}\hat{v}), & \textup{if }\hat{u}=\hat{v}\textup{ and }\tilde{u}\tilde{v}\in E(Q_K).
\end{array}
\right.
\]
The colour sets used by $\eta$ and $\theta$ are disjoint. Thus, $\phi$ uses at most $C_kaK$ colours. We claim that $\phi$ is a colouring of $Q_{aK+b}$ that satisfies the strong assertion. Let $S=(v_1,\dots,v_k)\in V(Q_{aK+b})^k$, with $\hat{S}=(\hat{v}_1,\dots,\hat{v}_k)\in V(Q_{(a-1)K+b})^k$ and $\tilde{S}=(\tilde{v}_1,\dots,\tilde{v}_k)\in V(Q_K)^k$, where $\hat{v}_i\oplus\tilde{v}_i=v_i$ for $1\le i\le k$. By induction, there is a rainbow $\hat{S}$-subdivided closed walk $\hat{W}$ in $Q_{(a-1)K+b}$ under $\eta$, and a rainbow $\tilde{S}$-subdivided closed walk $\tilde{W}$ in $Q_K$ under $\theta$. Let $\hat{W}=(\hat{v}_1, L_1,\dots,\hat{v}_k,L_k,\hat{v}_1)$ and $\tilde{W}=(\tilde{v}_1, M_1,\dots,\tilde{v}_k,M_k,\tilde{v}_1)$. Now for consecutive vertices $v_i$ and $v_{i+1}$, where $1\le i\le k$, we define the $v_i-v_{i+1}$ path $P_i$ in $Q_{aK+b}$, as follows. 
\begin{itemize}
\item If $v_i=v_{i+1}$, then $P_i$ is trivial.
\item If $v_i\neq v_{i+1}$, with $\hat{v}_i=\hat{v}_{i+1}$, then let $P_i$ be the copy of $M_i$ that connects $v_i$ and $v_{i+1}$. Note that $P_i$ has length at least $2$.
\item Let $v_i\neq v_{i+1}$, with $\hat{v}_i\neq\hat{v}_{i+1}$. Then $L_i$ has length at least $2$. There exist copies of $L_i$ within the copies of $Q_{(a-1)K+b}$ that contain $v_i$ and $v_{i+1}$. Let $L'$ and $L''$ be these two copies of $L_i$. Let $u$ be an internal vertex of $L_i$, and $u',u''$ be the copies of $u$ in $L'$ and $L''$. Let $M$ be the copy of $M_i$ that connects $u'$ and $u''$. Note that if $v_i$ and $v_{i+1}$ lie in the same copy of $Q_{(a-1)K+b}$, i.e., $\tilde{v}_i=\tilde{v}_{i+1}$, then $L'=L''$, $u'=u''$, and $M$ is trivial. Let $P_i=v_iL'u'Mu''L''v_{i+1}$, which has length at least $2$.
\end{itemize}
Now, let $W=(v_1,P_1, \dots,v_k,P_k, v_1)$. Then $W$ uses exactly the colours of $\hat{W}$ and $\tilde{W}$, so $W$ is rainbow coloured in $Q_{aK+b}$ under $\phi$. We claim that the paths $P_i$ are pairwise internally vertex-disjoint. It suffices to consider two paths of the type as described in the third item above. Let $P_i=v_iL'u'Mu''L''v_{i+1}$ and $P_j=v_jL^\ast u^\ast N u^{\ast\ast} L^{\ast\ast}v_{j+1}$ be two such paths, for some $1\le i<j\le k$ with $v_i\neq v_j$. By considering the first $(a-1)K+b$ coordinates of the vertices of $V(v_iL'u'\cup u''L''v_{i+1})\setminus\{v_i,v_{i+1}\}$ and $V(v_jL^\ast u^\ast\cup u^{\ast\ast}L^{\ast\ast}v_{j+1})\setminus\{v_j,v_{j+1}\}$, we obtain the internal vertices of $L_i$ and $L_j$, so these two sets of vertices are disjoint. By considering the last $K$ coordinates of the internal vertices of $M$ and $N$, we see that these two sets are disjoint, since we obtain the internal vertices of $M_i$ and $M_j$. Similarly,  $V(v_iL'u'\cup u''L''v_{i+1})\setminus\{v_i,v_{i+1}\}$ is disjoint from the set of internal vertices of $N$, since the last $K$ coordinates of any vertex in the former set is either $\tilde{v}_i$ or $\tilde{v}_{i+1}$, while those of any vertex in  the latter set are neither. The claim follows, and hence $W$ is a rainbow $S$-subdivided closed walk in $Q_{aK+b}$. This completes the induction step for the strong assertion, and the proof of the upper bound of (\ref{crx4Qneq}).

Now, for the lower bound of (\ref{crx4Qneq}), we prove that $crx_k(Q_n)>\frac{kn}{2}$ for $n\ge 4k^2$. We find $k$ vertices in $Q_n$ such that the distance between every pair is greater than $\frac{n}{2}$. Then the shortest cycle containing the $k$ vertices has length greater than $\frac{kn}{2}$, and the assertion follows. Let $k'=2^{\lceil\log_2k\rceil}\ge 4$, the smallest power of $2$ that is at least $k$. A \emph{Hadamard matrix} is a $q\times q$ $(-1,1)$-matrix such that any two column vectors are orthogonal. Equivalently, for any two column vectors $x$ and $y$, the number of coordinates where $x$ and $y$ agree equals the number of coordinates where $x$ and $y$ differ. The famous \emph{Hadamard conjecture} states that whenever $q$ is a multiple of $4$, then there exists a $q\times q$ Hadamard matrix. We employ  \emph{Sylvester's construction}, which gives the existence of a Hadamard matrix for every $q$ that is a power of $2$. The construction is given by the following recursion.
\[
H_1=(1),\textup{ and }H_{2^t}=
\begin{pmatrix}
H_{2^{t-1}} & H_{2^{t-1}}\\
H_{2^{t-1}} & -H_{2^{t-1}}
\end{pmatrix}
\textup{ for }t\ge 1. 
\]

Consider the $k'\times k'$ Hadamard matrix $H_{k'}$. Every entry in the first row is a $1$. Delete the first row of $H_{k'}$, replace every $-1$ with a $0$, and take $k$ of the $k'$ resulting column $(k'-1)$-vectors, say $c_1,\dots,c_k$. Let $A_1\dcup\cdots\dcup A_{k'-1}$ be a partition of $\{1,\dots,n\}$, where $|A_j|=\lfloor\frac{n}{k'-1}\rfloor$ or $|A_j|=\lceil\frac{n}{k'-1}\rceil$ for every $1\le j\le k'-1$. Let $v_1,\dots,v_k$ be the $n$-vectors where $v_{\ell i}=c_{\ell j}$ for $1\le \ell\le k$, $1\le i\le n$, and $i\in A_j$. Any two of $c_1,\dots,c_k$ have exactly $\frac{k'}{2}$ differing coordinates, so the distance between any two of $v_1,\dots,v_k$ is at least $\frac{k'}{2}\lfloor\frac{n}{k'-1}\rfloor>\frac{n}{2}$, since $n\ge 4k^2>(k'-1)^2$. Thus, any cycle of $Q_n$ containing $v_1,\dots, v_k$ has length greater than $\frac{kn}{2}$. 

Therefore, both inequalities of (\ref{crx4Qneq}) hold for $n\ge 4k^2$. The proof of Theorem \ref{crx4Qnthm} is complete.
\end{proof}

\noindent{\bf Remark.} Many other problems concerning the existence of rainbow cycles in edge-coloured discrete binary cubes have also been considered. A result of Faudree et al.~\cite{FGLS93} states that, for $n\ge 4$, $n\neq 5$, there exists an edge-colouring of $Q_n$ with $n$ colours such that, every copy of $C_4$ is rainbow. Since such an edge-colouring of $Q_n$ must be proper, $n$ colours are also necessary. This result was extended by Mubayi and Stading \cite{MS13}. They proved that for $\ell\equiv 0$ (mod $4$), there exists an edge-colouring of $Q_n$ with $\Theta_n(n^{\ell/4})$ colours such that, every copy of $C_\ell$ is rainbow, and moreover, $\Theta_n(n^{\ell/4})$ colours are also necessary. These may be considered as edge-chromatic number type results, and they  appear to be closely related to our problem of the determination of $crx_k(Q_n)$.\\[2ex]
\indent In Theorem \ref{crx4Qnthm}, we have only determined the correct order of magnitude of $crx_k(Q_n)$ for fixed $k$ and large $n$, and we have made no serious attempt to optimise the lower bound on $n$ (in terms of $k$), or the terms $c_k$ and $C_k$. We have $c_k=\frac{k}{2}$ and $C_k=2^{22k-1}$, so the gap between $c_k$ and $C_k$, when considered as a function of $k$, is very large. It would be desirable to reduce this large gap. We leave the determination of $crx_k(Q_n)$ as an open problem.

\begin{prob}
For $n\ge 4$, determine $crx_k(Q_n)$ for every $4\le k< 2^{n-1}$.
\end{prob}

\section*{Acknowledgement}


Henry Liu is partially supported by the Startup Fund of One Hundred Talent Program of SYSU, National Natural Science Foundation of China (No.~11931002), and National Key Research and Development Program of China (No. 2020YFA0712500).

\end{document}